\newtheorem{mydef}{Definition}
\newtheorem{thm}{Theorem}
\newtheorem{cor}{Corollary}
\newtheorem*{mainthm*}{Main Theorem}
\newtheorem{conj}{Question}
\newtheorem*{conj1*}{Question}
\newtheorem{prop}{Proposition}
\newtheorem{lemma}{Lemma}
\DeclareMathOperator{\cyl}{cyl}
\DeclareMathOperator{\CAT}{CAT}
\numberwithin{thm}{section} 
\numberwithin{prop}{section} 
\numberwithin{lemma}{section} 
\numberwithin{mydef}{section}
\title{Length spectra of flat metrics coming from $q$-differentials}
\author{Marissa Loving}					
\date{}
\begin{document}

\maketitle 

\begin{abstract} When geometric structures on surfaces are determined by the lengths of curves, it is natural to ask: {\em which} curves' lengths do we really need to know? It is a result of Duchin--Leininger--Rafi that any flat metric induced by a unit-norm quadratic differential is determined by its marked simple length spectrum. We generalize the notion of simple curves to that of {\em $q$-simple curves}, for any positive integer $q$, and show that the lengths of $q$-simple curves suffice to determine a non-positively curved Euclidean cone metric induced by a $q$-differential.\end{abstract}

\section{Introduction}

Let $S$ be a closed surface and let $\mathcal C = \mathcal{C}(S)$ be the set of homotopy classes of closed curves on $S$. For an isotopy class of metrics $m$ and $\alpha \in \mathcal C$, we denote the infimum of lengths of representatives of $\alpha$ in a representative metric for $m$ by $\ell_{m}(\alpha)$. We define the {\bf marked $\Sigma$-length spectrum} of $m$ for a subset $\Sigma \subset \mathcal C$ to be: \[\lambda_{\Sigma}(m) = ( \ell_{m}(\alpha))_{\alpha \in \Sigma} \in \mathbb R^{\Sigma}.\] 
 
Let $\mathcal G = \mathcal G(S)$ be a family of metrics, up to isotopy. If every $m \in \mathcal G$ is determined by $\lambda_{\Sigma}(m)$ for $\Sigma \subset \mathcal C$, then we say that $\Sigma$ is {\bf spectrally rigid} over $\mathcal G$. It is known that $\mathcal C$ is spectrally rigid for the set of all non-positively curved (NPC) Riemannian cone metrics, see \cite{otal90, croke90, crokefathifeld92, herspaul97, banklein18, constantine15}. In the presence of some ``local symmetry" for the metrics a much smaller set of curves may suffice. For example, a classical result of Fricke shows that the set $\mathcal S = \mathcal S(S)$ of simple closed curves is spectrally rigid for the Teichm\"uller space of hyperbolic metrics. Duchin--Leininger--Rafi \cite{duchleinrafi10} similarly proved that $\mathcal S$ is spectrally rigid for the space $\text{Flat}_2(S)$ of NPC Euclidean cone metrics coming from unit area holomorphic quadratic differentials.

Here we consider the space $\text{Flat}_q(S)$ of unit area flat metrics coming from holomorphic $q$-differentials for any integer $q \geq 1$. We define a new class of curves, denoted $\mathcal{S}_q$, called the {\em $q$-simple} curves on $S$ (see Definition \ref{def:q-simple}), and prove the following.  

\begin{mainthm*} Let $S$ be a closed surface of genus $\geq 2$. For any $q \geq 1$, $\mathcal{S}_q$ is spectrally rigid over $\text{Flat}_q(S)$. \end{mainthm*}

\subsection*{Acknowledgment} The author was supported by an NSF Graduate Research Fellowship under Grant No. DGE 1144245, as well as NSF grants DMS 1107452, 1107263, 1107367 ``RNMS: GEometric structures And Representation varieties", which allowed her to begin this project during the 2017-2018 Warwick EPSRC Symposium on Geometry, Topology and Dynamics in Low Dimensions. The author would like to thank Justin Lanier and Sunny Yang Xiao for helpful conversations that improved the exposition of this paper and Ser-Wei Fu for pointing out an error in an earlier version of the proof of Proposition \ref{prop:character} in the case of even $q$. Most importantly, the author is grateful to her PhD advisor Christopher Leininger for his patience and generosity.

\section{Preliminaries}

\noindent Throughout the remainder of the paper we will assume that $S$ is a closed surface of genus at least $2$.

\subsection*{Non-positively curved Euclidean cone metrics}

A geodesic metric $\varphi$ on $S$ is a {\bf Euclidean cone metric} if it satisfies the following conditions. 

\begin{itemize}
    \item[(i)] Except at a finite set of points, which we will denote $\text{cone}(\varphi)$, $S$ is Euclidean. That is, on $S \setminus \text{cone}(\varphi)$, $\varphi$ is locally isometric to $\mathbb R^2$ with the Euclidean metric. 
    
    \item[(ii)] At every point $x \in \text{cone}(\varphi)$ there is an $\varepsilon$-neighborhood of $x$ isometric to some cone, where a cone is obtained by gluing together a finite number of sectors of $\varepsilon$-balls about $0$ in $\mathbb R^2$ by isometries.
\end{itemize}

We will call the points $x \in \text{cone}(\varphi)$ {\bf cone points}. Note that condition (ii) gives us a well defined notion of {\bf cone angle} about each cone point $x$, denoted $\text{c}(x)$, as the sum of the angles in the sectors mentioned above. For $x \in S \setminus \text{cone}(\varphi)$ we will define $c(x) = 2 \pi$.

Recall the following well known fact relating cone angles and Euler characteristic which we will need in the proof of Lemma \ref{lemma:cylcurve}. The interested reader can find a proof of this special case of the Gauss-Bonnet formula in \cite{bankovic2014}.

\begin{prop}[Gauss--Bonnet formula] \label{prop:gauss-bonnet} Let $R$ be a surface of genus $g \geq 0$ together with a Euclidean cone metric $\varphi$. Then \[2 \pi \chi(R) = \sum_{x \in \text{cone}(\varphi)} (2 \pi - c(x)).\] \end{prop}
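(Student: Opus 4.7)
The plan is to obtain this identity as a combinatorial double-count on a geodesic triangulation of $(R,\varphi)$, in direct analogy with the standard proof of Gauss--Bonnet from the angle sum of a Euclidean triangle. The curvature of $\varphi$ is concentrated entirely at the cone points, so the right-hand side should emerge as the local contribution at each vertex after balancing angles against faces.

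First I would produce a geodesic triangulation $T$ of $R$ whose vertex set contains $\text{cone}(\varphi)$ and whose $2$-faces are honest Euclidean triangles, i.e.\ each face is isometric to a triangle in $\mathbb{R}^2$ via $\varphi$. Existence of such a triangulation for a closed Euclidean cone surface is standard: one can, for example, pass to a Delaunay-type decomposition relative to the cone set and subdivide, or refine a polygonal presentation of $\varphi$. The only properties I need going forward are that every point of $\text{cone}(\varphi)$ is a vertex and that every face is a genuine Euclidean triangle, so that its interior angles sum to $\pi$.

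Next I would count the total interior angle of $T$ in two ways. Summing face-by-face, each of the $F$ triangles contributes $\pi$, giving a total of $\pi F$. Summing vertex-by-vertex, the contribution at a smooth vertex of $T$ is $2\pi$, while the contribution at a cone point $x \in \text{cone}(\varphi)$ is exactly $c(x)$ by definition of the cone angle. Writing $N = |\text{cone}(\varphi)|$ and letting $V$ denote the total number of vertices of $T$, this equates
\[
\pi F \;=\; 2\pi(V-N) \;+\; \sum_{x\in\text{cone}(\varphi)} c(x).
\]

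To close the argument, I would eliminate $F$ using Euler's formula. Since every edge of a closed triangulated surface is shared by exactly two triangles, $2E = 3F$, so $V - E + F = V - F/2 = \chi(R)$, which rearranges to $F = 2(V - \chi(R))$. Substituting this into the previous display and simplifying yields
\[
2\pi\chi(R) \;=\; 2\pi N \;-\; \sum_{x\in\text{cone}(\varphi)} c(x) \;=\; \sum_{x\in\text{cone}(\varphi)}\bigl(2\pi - c(x)\bigr),
\]
as claimed. The only step that really requires care is producing the geodesic triangulation with $\text{cone}(\varphi)$ among its vertices; everything afterward is a one-line double-count plus Euler's formula, and this is why the statement is traditionally invoked as folklore.
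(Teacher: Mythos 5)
Your proof is correct. The paper itself offers no argument for this proposition --- it simply defers to Bankovic's thesis \cite{bankovic2014} --- so there is nothing in-text to compare against, but your double count is the standard proof and is what that reference carries out. The computation checks out: $\pi F = 2\pi(V-N) + \sum_x c(x)$ combined with $F = 2(V-\chi(R))$ from $2E = 3F$ gives exactly the claimed identity, and points of $\text{cone}(\varphi)$ with $c(x) = 2\pi$ or the empty cone set (a flat torus, $\chi = 0$) cause no trouble. You are also right to flag the existence of a geodesic triangulation with $\text{cone}(\varphi)$ in its vertex set as the only step with real content; the Delaunay decomposition relative to the cone set (subdividing its convex cyclic faces into triangles) does this for any cone angles, including the angles $2\alpha_i < \pi$ that arise when the paper later applies the proposition to a doubled triangle, and the edge--face count $2E = 3F$ survives even when the triangulation is only a $\Delta$-complex with identified edges. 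One could make the citation-free version fully self-contained by recalling that for the doubling application one does not even need the general existence theorem, since the doubled triangle comes with an obvious two-face geodesic triangulation after adding the interior cone points as vertices of a subdivision.
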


A Euclidean cone metric $\varphi$ is said to be {\bf non-positively curved} if it is locally $\CAT(0)$. By Gromov's Link Condition non-positive curvature of a Euclidean cone metric $\varphi$ is equivalent to $c(x) \geq 2 \pi$ for every $x \in S$, see \cite{bridhaef99}. Denote the set of non-positively curved Euclidean cone metric by $\text{Flat}(S)$. 

\subsection*{Flat metrics and their geodesics}

Since $\varphi$ has zero curvature away from cone points, then associated to $\varphi$ is a well-defined {\bf holonomy homomorphism} \[P_{\varphi}: \pi_1(S \setminus \text{cone}(\varphi), x) \to \text{SO}(2),\] which is given by parallel transport around loops based at $x$. The image under $P_{\varphi}$ of $\pi_1(S \setminus \text{cone}(\varphi),x)$ will often be referred to simply as the {\bf holonomy} of $\varphi$ and will be denoted by $\text{Hol}(\varphi)$. 

Let $\text{Flat}_q(S) = \left\{ \varphi \in \text{Flat}(S) ~|~ \text{Hol}(\varphi) \leq \left< r_{\frac{2 \pi}{q}} \right>\right\},$ where $r_{\theta}$ is a rotation through the angle $\theta$. We can also think of $\text{Flat}_q(S)$ as the space of flat metrics coming from holomorphic $q$-differentials. For a discussion of this complex analytic viewpoint of $\text{Flat}_q(S)$ see \cite{bankovic2014}. 

For $\varphi \in \text{Flat}_q(S)$, we will define the {\bf holonomy almost trivializing} branched cover to be the branched cover corresponding to $P_{\varphi}^{-1}(\left<r_{\pi} \right>)$. Note that when $q$ is odd this is the same as the {\bf holonomy trivializing cover}, which is the branched cover corresponding to the kernel of $P_{\varphi}$. 

\begin{mydef} \label{def:q-simple} A curve $\alpha \in \mathcal{C}$ is a {\bf $q$-simple curve} if there exists $\varphi \in \text{Flat}_q(S)$ and a simple closed curve $\alpha'$ in the holonomy almost trivializing cover of $(S, \varphi)$ whose geodesic representative projects to the geodesic representative of $\alpha$.\end{mydef}

Let $\mathcal{S}_q$ be the set of $q$-simple curves for some $q \geq 1$. Note that $\mathcal{S}_q$ is a subset of the collection of curves $\alpha \in \mathcal{C}$, which are images of homotopy classes of simple closed curves in some cyclic $q'$-fold branched cover, where $q' | q$. 

Recall that a closed curve $\gamma$ on $S$ is a continuous map $\gamma: S^1 \to S$. However, we will often identify a curve with its image in $S$ and simply denote $\gamma: S^1 \to S$ by $\gamma \subset S$. Furthermore, we will often abuse notation and conflate a curve with its unoriented homotopy class.

An important feature of flat metrics is the structure of their geodesics. In particular, geodesics in a flat metric come in two distinct flavors as stated in the following proposition whose proof can be found in \cite{bankovic2014}. 

\begin{prop}[Proposition 2.2, \cite{bankovic2014}] \label{prop:bankovic}
Let $\varphi \in \text{Flat}(S)$. A closed curve $\gamma \subset S$ is a $\varphi$-geodesic if and only if $\gamma$ is either a closed Euclidean geodesic or $\gamma$ is a concatenation of Euclidean line segments between cone points such that the angles between consecutive segments are $\geq \pi$ on each side of the curve $\gamma$.
\end{prop}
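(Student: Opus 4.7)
The plan is to prove the equivalence by carefully analyzing what local length-minimization means on a Euclidean cone surface, splitting into the case where $\gamma$ avoids $\text{cone}(\varphi)$ entirely and the case where it meets some cone points. Throughout I parametrize $\gamma$ by arc length and recall that a closed curve is a $\varphi$-geodesic exactly when it is locally length-minimizing.

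For the forward direction, assume $\gamma$ is a $\varphi$-geodesic. On $S\setminus\text{cone}(\varphi)$ the metric is locally isometric to $\mathbb R^2$, so any length-minimizing arc that stays in this open set must be a straight Euclidean line segment in local coordinates. If $\gamma$ never meets $\text{cone}(\varphi)$, stitching these local segments together yields a closed Euclidean geodesic, the first alternative. Otherwise, let $x_1,\dots,x_n$ be the cone points met by $\gamma$ (finite by compactness and the discreteness of $\text{cone}(\varphi)$); between consecutive $x_i$ the argument above forces $\gamma$ to be a Euclidean straight segment. It remains to verify the angle condition at each $x_i$. I would argue by contradiction: if one of the two sector angles at $x_i$ cut out by $\gamma$ is strictly less than $\pi$, I develop the corresponding sector of a small neighborhood of $x_i$ isometrically into $\mathbb R^2$ and exhibit a shortcut obtained by pushing $\gamma$ slightly off $x_i$ into that sector. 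The triangle inequality in the plane produces a competitor with strictly smaller length, contradicting local length-minimization.

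For the reverse direction, suppose $\gamma$ satisfies one of the two conditions. The closed Euclidean geodesic case is immediate since straight lines in Euclidean charts are length-minimizing. In the concatenation case, away from the cone points $\gamma$ is a straight segment and thus locally length-minimizing. At a cone point $x_i$ where both sector angles are $\geq \pi$, I would develop a small neighborhood of $x_i$ along the curve into the plane, so that one of the two sides adjacent to $\gamma$ is mapped isometrically onto a Euclidean half-disc. Since the developed image of $\gamma$ near $x_i$ is a broken line meeting at $x_i$ with angle $\geq \pi$ on the developed side, any nearby competitor path with the same endpoints projects to a path in this half-disc of length at least that of $\gamma$; the same argument applied to the other side handles competitors passing on that side, and a shortest path cannot wind around $x_i$ because the cone angle is $\geq 2\pi$. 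This yields local length-minimization near $x_i$.

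The main obstacle is the careful local analysis at cone points: one must compare all nearby competitor paths, not just those lying in a single developing chart, against $\gamma$. The angle-$\geq \pi$ hypothesis on each side is exactly what makes the comparison work and is the cone-point manifestation of Gromov's link condition already used in the excerpt to characterize non-positive curvature. Once the comparison argument at a single cone point is in place, the rest of the proof is bookkeeping: splicing together the Euclidean straight segments, the behavior at each $x_i$, and the arc-length parametrization gives the stated characterization.
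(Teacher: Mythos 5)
The paper does not prove this proposition at all: it is quoted verbatim from Bankovic \cite{bankovic2014} (``whose proof can be found in \cite{bankovic2014}''), so there is no in-paper argument to compare against, and your sketch should be judged against the standard proof, which it essentially reproduces correctly: local straightness away from $\text{cone}(\varphi)$, a planar shortcut when some sector angle at a cone point is $<\pi$, and a development/comparison argument showing a broken line with angle $\geq\pi$ on both sides is locally minimizing. The one step you should tighten is the reverse direction at a cone point: a sector of angle $\theta\geq\pi$ does \emph{not} develop isometrically onto a Euclidean half-disc (it develops onto a planar sector of angle $\theta$, and not even injectively when $\theta>2\pi$), so ``projecting competitors into a half-disc'' is not literally available. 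The clean statement you want is the metric formula for a Euclidean cone: if two points lie at distances $r_1,r_2$ from the apex and their angular separation in the link is at least $\pi$ (measured on both sides, which is exactly the hypothesis that both sector angles cut out by $\gamma$ are $\geq\pi$), then every path joining them in a small cone neighborhood has length at least $r_1+r_2$, with equality for the concatenation of the two radial segments. That single fact handles all competitors at once, including those that cross $\gamma$ or wind around the apex, and it is where non-positive curvature ($c(x)\geq 2\pi$) enters, since otherwise the two sector angles could not both be $\geq\pi$. With that substitution your argument is complete and is, as far as one can tell, the same proof as in the cited reference.
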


For any $\varphi \in \text{Flat}(S)$, up to parametrization, $\varphi$-geodesics are unique except for when there is a family of parallel geodesics filling up a Euclidean cylinder (that is when they are of the first type in Proposition \ref{prop:bankovic}). Note that in general a Euclidean cylinder in $(S, \varphi)$ will not be embedded. We will call the homotopy class of a curve with non-unique geodesic representatives foliating a cylinder a {\bf cylinder curve} and we will define the {\bf cylinder set} of $\varphi$, denoted $\cyl(\varphi) \subset \mathcal{C}(S)$, to be the set of all cylinder curves with respect to $\varphi$. Note that cylinder curves for a metric in $\text{Flat}_q(S)$ are necessarily $q$-simple. We will call a geodesic segment between two (not necessarily distinct) cone points that has no cone points in its interior a {\bf saddle connection}.

Note that, unlike geodesic representatives of curves in a hyperbolic metric, $\varphi$-geodesic representatives of curves do not always intersect transversely. Indeed two $\varphi$-geodesics can agree along multiple saddle connections. However, given two curves $\alpha, \beta \subset (S, \varphi)$ we can make precise the meaning of a point of intersection $p \in \alpha \cap \beta$ by recalling that intersection points correspond bijectively with $\pi_1(S)$-orbits of linking pairs of lifts $\widetilde{\alpha}, \widetilde{\beta} \subset \widetilde{S}$. See the proof of Theorem 6.3.10 in \cite{martelli16} for a careful description of this bijection. 

\subsection*{Geodesic currents}

Consider the universal cover $p: \widetilde{S} \to S$. For any geodesic metric $m$ on S there is an induced metric $\widetilde{m}$ on $\widetilde{S}$. Now consider the Gromov boundary $S^1_{\infty}$ of $\widetilde{S}$ with respect to $\widetilde{m}$. $S^1_{\infty}$ compactifies $\widetilde{S}$ to a closed disk and the action of $\pi_1(S)$ on $\widetilde{S}$ extends by homeomorphism to an action on this disk. 

We can think of $S^1_{\infty}$ as being independent of the choice of geodesic metric since for any other geodesic metric the identity on $\widetilde{S}$ extends to a homeomorphism between the corresponding closed disks. We will consider the set of unordered pairs of distinct points in $\widetilde{S}$ \[\mathcal{G}(\widetilde{S}) = (S^1_{\infty} \times S^1_{\infty} \setminus D) /_{(x,y) \sim (y,x)},\] where $D = \{(x, x) | x \in S^1_{\infty}\}$ is the diagonal. Note that the action of $\pi_1(S)$ on $\widetilde{S}$ gives an action on $\mathcal{G}(\widetilde{S})$.

We define a {\bf geodesic current} to be a $\pi_1(S)$-invariant Radon measure on $\mathcal{G}(\widetilde{S})$. The space of geodesic currents on $S$ equipped with the weak* topology will be denoted $\text{Curr}(S)$.

There is a natural way to associate a geodesic current to an essential closed curve $\gamma \subset S$. Consider the endpoints on $S_{\infty}^1$ of complete lifts $\widetilde{\gamma}:\mathbb R \to \widetilde{S}$ of $\gamma$ to $\widetilde{S}$, which form a discrete, $\pi_1(S)$-invariant set of points in $\mathcal{G}(\widetilde{S})$. The counting measure on this set defines a geodesic current on $S$ which we also denote $\gamma$. An important result about geodesic currents is the following theorem of Bonahon, see \cite{bonahon88}.

\begin{thm} \label{thm:bonahon} There exists a continuous, symmetric, bilinear form \[i: 
\text{Curr}(S) \times \text{Curr}(S) \to \mathbb R\] such that for any two currents $\gamma_1$, $\gamma_2$ associated to closed curves of the same name, $i(\gamma_1, \gamma_2)$ is the geometric intersection number of the homotopy classes of these curves.\end{thm}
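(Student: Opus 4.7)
The plan is to construct $i$ by integrating the product of the two currents over the set of transversely linking pairs of lifted geodesics. First I would introduce the topological \emph{linking} relation on $\mathcal{G}(\widetilde{S})$: say $\{a,b\}, \{c,d\} \in \mathcal{G}(\widetilde{S})$ are linked if $c$ and $d$ lie in distinct components of $S^1_\infty \setminus \{a,b\}$. Linking is a purely combinatorial condition on the boundary circle (independent of any choice of metric), and for any geodesic metric it corresponds precisely to the two bi-infinite geodesics with the given endpoints crossing transversely in $\widetilde{S}$. Let $\text{DG}(\widetilde{S}) \subset \mathcal{G}(\widetilde{S}) \times \mathcal{G}(\widetilde{S})$ denote the open subset of linked pairs; it is invariant under the diagonal action of $\pi_1(S)$.

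For $\mu, \nu \in \text{Curr}(S)$, the product $\mu \times \nu$ is a $\pi_1(S) \times \pi_1(S)$-invariant Radon measure on $\mathcal{G}(\widetilde{S}) \times \mathcal{G}(\widetilde{S})$, and in particular is invariant under the diagonal $\pi_1(S)$-action. I would then define
\[ i(\mu,\nu) \defeq (\mu \times \nu)(F), \]
where $F \subset \text{DG}(\widetilde{S})$ is a Borel fundamental domain for the diagonal $\pi_1(S)$-action. Symmetry is built in since the linking relation is symmetric in the two factors, and bilinearity is inherited from the bilinearity of the product of measures. For continuity in the weak* topology, rather than pair against the indicator function of $F$ directly (whose boundary could absorb mass in the limit) I would fix a countable family $\{\phi_i\}$ of continuous, compactly supported, nonnegative functions on $\mathcal{G}(\widetilde{S}) \times \mathcal{G}(\widetilde{S})$ with each $\text{supp}(\phi_i) \subset \text{DG}(\widetilde{S})$ and with $\sum_i \sum_{g \in \pi_1(S)} \phi_i \circ g \equiv 1$ on $\text{DG}(\widetilde{S})$. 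Then $i(\mu,\nu) = \sum_i \int \phi_i \, d(\mu \times \nu)$, and each summand is continuous in $(\mu,\nu)$ by the defining property of weak* convergence.

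For currents $\gamma_1, \gamma_2$ associated to closed curves, both measures are sums of Dirac masses at endpoint pairs of complete lifts, so $\gamma_1 \times \gamma_2$ restricted to $\text{DG}(\widetilde{S})$ is supported on the discrete set of linking pairs of lifts $(\widetilde{\alpha}, \widetilde{\beta})$. Summing over $F$ counts exactly the $\pi_1(S)$-orbits of such linking lifts, which by the bijection recalled in the preliminaries is the geometric intersection number of the homotopy classes.

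The main obstacle I anticipate is establishing that $(\mu \times \nu)(F)$ is finite for general currents, since $\text{DG}(\widetilde{S})/\pi_1(S)$ fibers over $S$ with a non-compact fiber of unordered pairs of distinct tangent directions. The standard workaround is a flow-box decomposition of the compact unit tangent bundle $T^1 S$ together with a uniform local bound on the mass a geodesic current places on a flow box; the same local structure delivers both finiteness of the pairing and the local finiteness needed for the partition of unity in the continuity argument.
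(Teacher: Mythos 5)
The paper does not prove this statement at all: it is imported verbatim as a theorem of Bonahon, with a citation to \cite{bonahon88}, and is used as a black box in the sequel. So there is no internal argument to compare against; what you have written is an outline of Bonahon's original construction, and in outline it is the right one. The definition of $i(\mu,\nu)$ as the $\mu\times\nu$-mass of a fundamental domain for the diagonal action on the set of linked pairs, the symmetry and bilinearity, and the verification that for curve currents this counts $\pi_1(S)$-orbits of linking lifts (hence the geometric intersection number) are all correct. One point you pass over silently: to know that a Borel fundamental domain exists and that its measure is independent of the choice, you need the diagonal $\pi_1(S)$-action on the linked-pair set to be properly discontinuous; this is usually extracted from the equivariant ``crossing point'' map to $\widetilde{S}$ obtained by realizing each linked pair as two hyperbolic geodesics meeting in a single point for an auxiliary hyperbolic metric.

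The genuine gap is continuity. Your partition-of-unity formula writes $i(\mu,\nu)$ as an infinite sum of terms, each weak*-continuous in $(\mu,\nu)$, but an infinite sum of continuous nonnegative functions is in general only lower semicontinuous. Since the linked-pair set is open and its frontier consists of pairs of geodesics sharing an endpoint at infinity, mass can a priori escape to that frontier under weak* limits, destroying upper semicontinuity. Ruling this out --- via a bound on the $\mu\times\nu$-mass of a neighborhood of the frontier that is uniform over weak*-compact families of currents, proved with the flow-box estimates you allude to --- is the actual content of Bonahon's proof, not a routine ``workaround.'' You have correctly located where the difficulty lives (a uniform local mass bound on flow boxes), but as written that lemma is doing all the work and is not proved, so the sketch does not yet constitute a proof of the theorem.
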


Another important fact about geodesic currents was proved by Otal \cite{otal90}.

\begin{thm} Two currents $\mu_1, \mu_2 \in \text{Curr}(S)$ are equal if an only if $i(\gamma, \mu_1) = i(\gamma, \mu_2)$ for every curve $\gamma \in \text{Curr}(S)$. \end{thm}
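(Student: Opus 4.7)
The forward direction is immediate from the well-definedness of $i$, so the substance lies in proving that equal intersection numbers with every closed curve $\gamma$ forces $\mu_1 = \mu_2$. My plan is to view currents as Radon measures on $\mathcal{G}(\widetilde{S})$ and to reconstruct $\mu$ from intersection numbers by decomposing it along a convenient generating family of Borel sets.

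First I would reduce the problem to checking agreement on a $\pi$-system of sets generating the Borel $\sigma$-algebra. For disjoint open arcs $I, J \subset S^1_\infty$, define the box $B(I,J) = \{(\xi, \eta) \in \mathcal{G}(\widetilde{S}) : \xi \in I,\ \eta \in J\}$. Such boxes form a $\pi$-system generating the Borel structure, so by uniqueness of extension for Radon measures, it is enough to prove $\mu_1(B(I,J)) = \mu_2(B(I,J))$ for every box (interpreted $\pi_1(S)$-equivariantly in the quotient).

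Next I would connect box measures to the intersection pairing. For a closed curve $\gamma$ with a chosen lift $\widetilde{\gamma}$ whose endpoints $p_\gamma^\pm$ divide $S^1_\infty$ into arcs $I_\gamma, J_\gamma$, a geodesic in $\mathcal{G}(\widetilde{S})$ transversely meets $\widetilde{\gamma}$ exactly once iff it lies in $B(I_\gamma, J_\gamma)$. Unpacking Bonahon's construction of $i$ from Theorem \ref{thm:bonahon}, one obtains a formula of the form
\[
i(\gamma, \mu) \;=\; \sum_j \mu\bigl(B(I_j, J_j)\bigr),
\]
where the boxes on the right range over a fundamental domain for the action of $\pi_1(S)$ on lifts of $\gamma$, modulo the stabilizer of $\widetilde{\gamma}$. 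Applying this to $\mu_1$ and $\mu_2$, the hypothesis gives equality of these box sums for every $\gamma$.

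The hardest step is extracting the measure of a single box from these orbit sums. I would use density of axis-endpoint pairs of elements of $\pi_1(S)$ in $S^1_\infty \times S^1_\infty$ to produce sequences of closed geodesics whose lifts approach the transverse diagonals of a prescribed box $B(I,J)$, then use a nesting / inclusion-exclusion scheme on the endpoints to cancel the unwanted orbit translates. In the limit this isolates $\mu(B(I,J))$ as a combination of intersection numbers; since the hypothesis equates these combinations for $\mu_1$ and $\mu_2$, the box measures agree and the theorem follows. The main obstacle is precisely this last separation: because $i(\gamma, \mu)$ bundles contributions over an infinite $\pi_1(S)$-orbit of translates, recovering a single summand requires careful control of the asymptotic geometry of the $\pi_1(S)$-action on $S^1_\infty \times S^1_\infty$ and of the regularity (outer/inner) of $\mu$ on shrinking box families.
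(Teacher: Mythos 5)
Note first that the paper does not prove this statement: it is Otal's theorem, quoted with a citation to \cite{otal90}, so your proposal can only be measured against the standard argument there. Your overall architecture does match that argument: reduce to showing that the two Radon measures agree on boxes $B(I,J)$ (a generating $\pi$-system, so uniqueness of extension for Radon measures applies), relate $i(\gamma,\cdot)$ to box measures, and recover the measure of an arbitrary box by approximating its corners with fixed points of hyperbolic elements, using regularity to pass to the limit.

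There are two problems, one minor and one fatal to the proposal as written. The minor one: your displayed formula $i(\gamma,\mu)=\sum_j \mu\bigl(B(I_j,J_j)\bigr)$, with the boxes ranging over ``a fundamental domain for the action of $\pi_1(S)$ on lifts of $\gamma$,'' is not correct. Since $\pi_1(S)$ acts transitively on the lifts, the right statement is that $i(\gamma,\mu)$ equals the $\mu$-measure of a fundamental domain for the action of the stabilizer $\langle\gamma\rangle$ on the \emph{single} box $B(I_\gamma,J_\gamma)$ cut out by the endpoints of one lift $\widetilde{\gamma}$; this fundamental domain can be arranged to be a box $B(I_0,J_\gamma)$ with $I_0\subset I_\gamma$ a fundamental arc for $\gamma$ acting on $I_\gamma$. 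Summing full boxes over all lifts would overcount and typically diverge. The fatal one: the entire content of Otal's theorem is the ``hardest step'' you defer --- producing an explicit combination of intersection numbers (in the standard proof, an inclusion--exclusion of quantities $i(\gamma_n,\mu)$ for closed geodesics whose axis endpoints converge to the corners of the target box, with the limit justified by outer regularity and by restricting to boxes whose boundary has $\mu$-measure zero) that isolates $\mu(B(I,J))$. You name this obstacle accurately but do not overcome it; as written, the hypothesis only yields equality of $\mu_1$ and $\mu_2$ on the particular boxes $B(I_0,J_\gamma)$ arising from closed geodesics, and the passage from these to all boxes is asserted rather than proved. Until that extraction lemma is stated and established, the proposal is a correct outline of the cited proof, not a proof.
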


We will be concerned with a certain class of geodesic currents called the {\bf Liouville currents} $L_m$ which are associated to certain types of metric $m$. The Liouville current has the geometricity property that the intersection form, given in Theorem \ref{thm:bonahon}, recovers the lengths of curves, i.e. for every essential closed curve $\gamma \subset S$ \[i(\gamma, L_m) = \ell_m(\gamma).\]

The Liouville current exists for many types of metrics. It generalizes the Liouville measure on geodesics in the hyperbolic plane given by cross ratios. For example, Erlandsson \cite{erlandsson} defines a type of combinatorial Liouville current associated to a simple generating set for a surface group which realizes the (word) length function on the surface (with respect to the chosen generating set) as the intersection with this current. A more careful treatment of the Liouville current associated to a flat metric in \cite{duchleinrafi10} and \cite{banklein18}. 

We will mostly concern ourselves with the support of the Liouville current for a metric $\varphi \in \text{Flat}(S)$, since a key tool in the proof of the Main Theorem is the following result of Duchin--Erlandsson--Leininger--Sadanand.

\begin{thm}[Support Rigidity Theorem, \cite{currents18}] \label{thm:support-rigidity}
Suppose $\varphi_1, \varphi_2$ are two unit-area flat metrics whose Liouville currents have the same support, $\text{supp}(L_{\varphi_1}) = \text{supp}(L_{\varphi_2})$. Then $\varphi_1, \varphi_2$ differ by an affine deformation, up to isotopy. If either metric has holonomy of order greater than $2$ (i.e. is not induced by a quadratic differential), then equal support implies that $\varphi_1$ and $\varphi_2$ differ by an isometry, isotopic to the identity. \end{thm}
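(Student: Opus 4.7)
The plan is to unpack geometrically what the support of $L_\varphi$ records and then use that data to reconstruct $\varphi$ up to affine deformation. As a first step, I would verify that for $\varphi \in \text{Flat}(S)$ one has $(x,y) \in \text{supp}(L_\varphi)$ if and only if $x,y$ are the endpoints in $S^1_\infty$ of some bi-infinite $\varphi$-geodesic in $\widetilde{S}$. For flat metrics the Liouville current can be written as a superposition over directions of straight-line measured foliations, so its support closes up on every endpoint pair realized by a $\varphi$-geodesic. Thus the hypothesis $\text{supp}(L_{\varphi_1}) = \text{supp}(L_{\varphi_2})$ says that $\varphi_1$ and $\varphi_2$ determine the same bi-infinite geodesic skeleton on $\widetilde{S}$.

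Next I would read off from the support the two species of geodesic in Proposition \ref{prop:bankovic}. Cylinder curves should be detectable as homotopy classes whose lifted endpoint set in $\text{supp}(L_\varphi)$ sits inside a continuous one-parameter family of mutually disjoint pairs (the parallel rulings of the cylinder). Saddle connections, or rather the saddle-connection structure of geodesics, should be detectable at the points in $\mathcal{G}(\widetilde{S})$ where the support branches: two geodesics sharing a saddle connection produce a pair of endpoint-pairs that agree along a prefix but separate at a singular point. From this structural analysis I would recover from the support alone: the set $\cyl(\varphi)$ of cylinder curves, the set of homotopy classes of saddle connections, and the cyclic incidence data at each cone point (which saddle connections meet, in which order, around a given cone).

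Now I would reconstruct the flat metric up to affine equivalence. The saddle-connection combinatorics produces a common geodesic triangulation of $S$ by Euclidean polygons for both $\varphi_1$ and $\varphi_2$; on each piece we have two Euclidean structures sharing the same combinatorial side-identification pattern, so they differ by a $\mathrm{GL}^+(2,\mathbb{R})$-deformation. The constraint that parallel sides in each cylinder remain parallel (from step two) and that cone angles match around each cone point (from Gauss--Bonnet, Proposition \ref{prop:gauss-bonnet}, together with incidence data) forces these local affine maps to agree on overlaps and globalize to a single affine automorphism of $\mathbb{R}^2$ intertwining the two developing maps. Using the unit-area normalization pins the determinant down to $\pm 1$. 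This proves the first assertion.

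For the second assertion, suppose one of the metrics has holonomy of order $q>2$ in $\mathrm{SO}(2)$. The affine map $A \in \mathrm{SL}^{\pm}(2,\mathbb{R})$ relating the two developing maps must conjugate $\mathrm{Hol}(\varphi_1)$ into $\mathrm{Hol}(\varphi_2) \subset \langle r_{2\pi/q}\rangle$. Since $A r_{2\pi/q} A^{-1}$ preserves trace and must again lie in $\mathrm{SO}(2)$, a direct computation shows $A$ is forced to be a similarity (rotation composed with positive scalar); the unit-area hypothesis kills the scalar, leaving $A \in \mathrm{SO}(2)$, so $\varphi_1$ and $\varphi_2$ differ by an isometry isotopic to the identity. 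The main obstacle I anticipate is the second paragraph: carefully characterizing cylinder curves and saddle-connection incidence purely from endpoint-pair data, because $\varphi$-geodesics are non-transverse in general and two distinct geodesics may share arbitrary concatenations of saddle connections, so pulling apart the support into ``cylinder part'' and ``saddle-connection part'' is the delicate combinatorial heart of the argument.
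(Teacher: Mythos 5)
A point of context first: the paper does not prove this statement at all --- it is quoted verbatim from \cite{currents18} (Duchin--Erlandsson--Leininger--Sadanand), so there is no in-paper proof to compare your attempt against. Judged on its own terms, your proposal is a plan rather than a proof, and its first concrete assertion is already false. You claim that $(x,y)\in\text{supp}(L_{\varphi})$ if and only if $x,y$ are the endpoints of some bi-infinite $\varphi$-geodesic in $\widetilde{S}$. But by Proposition \ref{prop:basic-geodesics} (quoted in the paper from \cite{banklein18}), the support consists of the endpoints of the \emph{closure of the set of nonsingular geodesics}. By Proposition \ref{prop:bankovic}, a concatenation of saddle connections turning through angle strictly greater than $\pi$ on \emph{both} sides at some cone point is a perfectly good $\varphi$-geodesic, yet it is not a limit of nonsingular geodesics and its endpoint pair need not lie in the support. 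So ``same support'' is strictly finer information than ``same set of geodesics,'' and this distinction is not a technicality: the support remembers which turns are by angle exactly $\pi$ on one side, which is precisely the data the actual argument exploits. Building your reconstruction on the weaker, incorrect characterization would lose the theorem's content at the outset.

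Beyond that, the entire substance of the theorem lives in the step you defer: recovering from the bare endpoint-pair data the cone points, the saddle connections incident to them with their cyclic order and angles, and a common polygonal decomposition on which $\varphi_1$ and $\varphi_2$ can be compared affinely. You candidly flag this as ``the delicate combinatorial heart,'' but naming the obstacle is not the same as overcoming it; as written, the second and third paragraphs of your proposal are a statement of intent. (For comparison, the argument in \cite{currents18} occupies most of that paper and must contend with exactly the non-transversality and shared-saddle-connection phenomena you mention.) The one step you do carry out --- that an affine map conjugating a rotation of order greater than $2$ into $\mathrm{SO}(2)$ must be a similarity, hence an isometry after the unit-area normalization --- is correct linear algebra and matches how the second assertion follows from the first. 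So: the final reduction is fine, the opening characterization of the support is wrong, and the middle, which is the theorem, is missing.
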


We will find the following description of the structure of $\text{supp}(L_{\varphi})$ for a flat metric $\varphi$ very useful. It was proved by Bankovic--Leininger in \cite{banklein18}.

\begin{prop}[Corollary 3.5, \cite{banklein18}] \label{prop:basic-geodesics} For any $\varphi \in \text{Flat}(S)$, the support of $L_{\varphi}$ consists of endpoints of the closure of the set of nonsingular geodesics.\end{prop}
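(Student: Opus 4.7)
The plan is to establish set-theoretic equality of $\text{supp}(L_\varphi)$ and the closure $\overline{E} \subseteq \mathcal{G}(\widetilde{S})$ of the set $E$ of endpoint-pairs of lifts of nonsingular geodesics (i.e.\ geodesics disjoint from $\text{cone}(\varphi)$), proving both containments separately. The key technical input is the explicit local construction of $L_\varphi$ developed in \cite{duchleinrafi10,banklein18}: after passing to the holonomy almost trivializing cover to make directions globally well-defined up to sign, $L_\varphi$ disintegrates locally as an integral, over directions $\theta \in [0,\pi)$, of Lebesgue-type transverse measures to the parallel straight-line foliation in direction $\theta$.

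For the containment $\overline{E} \subseteq \text{supp}(L_\varphi)$, since the support is already closed it suffices to show $E \subseteq \text{supp}(L_\varphi)$. Pick $(x,y) \in E$, so there is a nonsingular lift $\widetilde{\gamma} \subseteq \widetilde{S}$ with endpoint-pair $(x,y)$; this $\widetilde{\gamma}$ is a Euclidean straight line of some direction $\theta$. Given any open neighborhood $U$ of $(x,y)$ in $\mathcal{G}(\widetilde{S})$, a small open family of parallel translates of $\widetilde{\gamma}$ in direction $\theta$ have endpoint-pairs landing in $U$. Since cone points are isolated in $\widetilde{S}$, the subfamily consisting of nonsingular translates has positive transversal Lebesgue length, and the direction-$\theta$ factor of $L_\varphi$ therefore gives $L_\varphi(U)>0$, whence $(x,y)\in\text{supp}(L_\varphi)$.

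For the reverse inclusion $\text{supp}(L_\varphi) \subseteq \overline{E}$, take $(x,y) \notin \overline{E}$ and choose an open neighborhood $U \ni (x,y)$ disjoint from $E$. Then every geodesic in $\widetilde{S}$ whose endpoint-pair lies in $U$ must be singular, i.e.\ either a nontrivial concatenation of saddle connections (Proposition \ref{prop:bankovic}, second type) or a straight line meeting a cone point. Using the direction-disintegration of $L_\varphi$, one bounds $L_\varphi(U)$ direction-by-direction: for each $\theta$, the direction-$\theta$ straight lines in $U$ all meet the (countable) lift of $\text{cone}(\varphi)$, and the lines of direction $\theta$ through any fixed cone point form a single point on each transversal, hence a Lebesgue null set; integrating over $\theta$ yields $L_\varphi(U)=0$, so $(x,y) \notin \text{supp}(L_\varphi)$.

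The main obstacle will be making the direction-disintegration of $L_\varphi$ precise and $\pi_1(S)$-equivariant, especially when $\text{Hol}(\varphi)$ is nontrivial, so that the transverse-Lebesgue description on the almost trivializing cover genuinely descends to statements about $\mathcal{G}(\widetilde{S})$. A secondary check is that saddle-connection concatenation geodesics, though singular themselves, have endpoint-pairs automatically lying in $\overline{E}$; this should follow from the cone-angle $\geq \pi$ condition in Proposition \ref{prop:bankovic}, which permits parallel perturbations on at least one side to produce nearby nonsingular straight-line geodesics whose endpoint-pairs converge to the concatenation's endpoint-pair.
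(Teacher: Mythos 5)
First, note that the paper itself offers no proof of this statement: Proposition \ref{prop:basic-geodesics} is imported verbatim as Corollary 3.5 of \cite{banklein18}, so your proposal can only be measured against the construction of $L_{\varphi}$ in that reference. Your overall framework --- two containments between $\text{supp}(L_{\varphi})$ and $\overline{E}$, each read off from a disintegration of $L_{\varphi}$ over directions --- is the right one, but two steps as written do not go through. The more serious is the containment $E \subseteq \text{supp}(L_{\varphi})$: you produce only a one-parameter family of parallel translates of $\widetilde{\gamma}$ in the \emph{fixed} direction $\theta$ and conclude positivity from ``the direction-$\theta$ factor.'' But in the disintegration you yourself invoke, $L_{\varphi}$ is an \emph{integral} $\int_0^{\pi} F_{\theta}\, d\theta$ of transverse measures; a single value of $\theta$ has $d\theta$-measure zero, so $F_{\theta}(U)>0$ for one $\theta$ gives no lower bound on $L_{\varphi}(U)$. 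You must perturb the direction as well, producing a two-parameter family (foot point on a short nonsingular transversal, direction in an interval) of nonsingular lines whose endpoint pairs all land in $U$; this works, but it uses the additional fact --- standard, via Gromov hyperbolicity of $\widetilde{S}$ --- that complete nonsingular geodesics converging locally uniformly have converging endpoint pairs, which you should state. Second, your proposed fix for nontrivial holonomy, passing to the holonomy almost trivializing cover, is unavailable here: the proposition is asserted for every $\varphi \in \text{Flat}(S)$, whose holonomy group need not be finite, so no such finite branched cover exists. The disintegration must instead be set up locally over transversal arcs, on which the angle coordinate is well defined; with that formulation your second containment (for each local chart and each direction, the singular lines through the countably many lifted cone points form a Lebesgue-null parameter set) is correct.

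Finally, your ``secondary check'' is false, and it is fortunate that it plays no role in either containment. It is not true that every concatenation-of-saddle-connections geodesic has endpoint pair in $\overline{E}$: Proposition \ref{prop:bankovic} only guarantees angle $\geq \pi$ on \emph{each} side at each cone point, and the side on which the angle is strictly greater than $\pi$ can alternate from one cone point to the next, so there need be no consistent side to which the geodesic can be pushed off to nearby nonsingular lines. Indeed, since any two distinct points of $S^1_{\infty}$ are joined by some $\widetilde{\varphi}$-geodesic, your claim would force $\overline{E} = \mathcal{G}(\widetilde{S})$, hence $\text{supp}(L_{\varphi}) = \mathcal{G}(\widetilde{S})$ for every flat metric; all unit-area flat metrics would then have equal supports, which is absurd in view of Theorem \ref{thm:support-rigidity}. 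Delete that remark; the two containments, repaired as above, already give the set equality.
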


Finally, we will use a result of Vorobets about directions of periodic geodesics to show that the endpoints of lifts of cylinder geodesics are dense in the support of the Liouville current for any $\text{Flat}_q(S)$ metric. 

\begin{thm}[Theorem 1.2(b), \cite{vorobets05}] \label{thm:vorobets} Let $\varphi \in \text{Flat}_1(S)$. For almost every $x \in (S, \varphi)$ directions of periodic geodesics passing through the point $x$ are dense in $S^1$.
\end{thm}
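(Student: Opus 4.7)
The plan is to combine Masur's classical density result for cylinder directions with a measure-theoretic covering argument. First, I would invoke Masur's theorem that on any translation surface $(S,\varphi) \in \text{Flat}_1(S)$ the set $\Theta \subset S^1$ of directions admitting a closed geodesic (equivalently a cylinder) is dense. For each $\theta \in \Theta$, let $U_\theta \subset S$ denote the union of the interiors of all maximal cylinders of $\varphi$ in direction $\theta$; this is an open set of positive $\varphi$-area, and a point $x \in S \setminus \mathrm{cone}(\varphi)$ lies in $U_\theta$ if and only if the straight-line flow through $x$ in direction $\theta$ is periodic.

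Fix a countable dense subset $\Theta_0 \subset \Theta$ and a countable basis $\{I_n\}$ of open arcs of $S^1$, and for each $n$ set
\[
V_n \;=\; \bigcup_{\theta \in \Theta_0 \cap I_n} U_\theta.
\]
If each $V_n$ has full $\varphi$-area, then almost every $x$ belongs to $\bigcap_n V_n$, and for any such $x$ every arc $I_n$ contains a direction of some periodic geodesic through $x$; that is, periodic directions through $x$ are dense in $S^1$. So the problem reduces to establishing $\varphi$-area$(V_n) = \mathrm{Area}(\varphi)$ for every $n$.

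To prove this full-measure statement I would use the $SL(2,\mathbb{R})$-action on the moduli space. Applying a rotation $r_\alpha$ with $\alpha$ near the center of $I_n$ and then flowing under the Teichm\"uller geodesic flow, Masur's non-divergence theorem (for those $\alpha$ where it applies) yields recurrence to compact subsets of the relevant stratum, and on each returning surface one extracts short closed geodesics. Pulling these back produces cylinders on $\varphi$ in many distinct directions inside $I_n$. A quantitative control on how these cylinder widths and areas accumulate across the sequence of returns should then be used to exhaust $\mathrm{Area}(\varphi)$ up to $\varepsilon$ within any prescribed arc $I_n$.

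The main obstacle is precisely this last step: mere density of cylinder directions is considerably weaker than the assertion that cylinders in directions drawn from any open arc collectively cover almost all of $S$. Controlling the $\varphi$-area of $V_n$ requires genuine uniformity in how cylinder systems vary with direction, and I expect the technical heart of Vorobets's proof to lie in exploiting the recurrence of Teichm\"uller trajectories --- or a direct combinatorial/geometric analysis of saddle connection families --- to simultaneously handle all the directions in a small arc at once.
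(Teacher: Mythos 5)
This statement is imported verbatim from Vorobets and the paper gives no proof of it (only the Corollary that follows is proved), so there is no internal argument to compare against; judged on its own terms, your proposal has a genuine gap, and you have correctly located it yourself. The reduction is fine and standard: with $U_\theta$ the union of open cylinders in direction $\theta$, a point has dense periodic directions if and only if it lies in $V_n=\bigcup_{\theta\in\Theta_0\cap I_n}U_\theta$ for every arc $I_n$ in a countable basis, so the theorem is exactly the claim that each $V_n$ has full area. But Masur's density of periodic directions only makes each $V_n$ a nonempty open set containing at least one cylinder of positive area; nothing in your argument upgrades ``positive measure'' to ``full measure,'' and without that upgrade the countable intersection $\bigcap_n V_n$ need not even have positive measure. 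The full-measure statement is not a technical afterthought --- it \emph{is} Theorem 1.2(b).

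Moreover, the mechanism you sketch for closing the gap points the wrong way. Recurrence of $g_t r_\alpha X$ to a compact part of the stratum is the hypothesis of Masur's criterion for \emph{unique ergodicity} of the direction $\alpha$, hence minimality of the straight-line flow, which precludes periodic orbits; conversely, any direction containing a cylinder is a \emph{divergent} direction, since the core curve's holonomy is contracted by $g_t$ after rotating that direction to the vertical. So ``non-divergence $\Rightarrow$ recurrence $\Rightarrow$ short closed geodesics $\Rightarrow$ cylinders'' cannot be the route: cylinders come from degeneration along the ray, not from returns to the thick part. Even granting a plentiful supply of cylinders with directions in $I_n$, you still need a uniform quantitative input --- for instance, a lower bound, depending only on the topology and holding uniformly over the $\mathrm{SL}(2,\mathbb{R})$-orbit, for the total area of periodic cylinders with direction in any prescribed arc --- which can then be bootstrapped to show $V_n$ is conull. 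That uniform estimate is the actual content of Vorobets's argument, and it is absent here; as written, a fixed positive-measure subset of $S$ could a priori be missed by every cylinder whose direction lies in $I_n$.
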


As a corollary to Theorem \ref{thm:vorobets} we have the following.

\begin{cor} \label{corollary:vorobets} Let $\varphi \in \text{Flat}_q(S)$. The endpoints of lifts of cylinder geodesics are dense in the support of the Liouville current $L_{\varphi}$. \end{cor}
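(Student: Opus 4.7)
The plan is to reduce to Theorem \ref{thm:vorobets} by passing to the holonomy trivializing cover. Let $\hat{\pi}:(\hat{S},\hat{\varphi})\to (S,\varphi)$ be the finite branched cover corresponding to $\ker(P_\varphi)$; since $\text{Hol}(\varphi)$ is finite, this cover is well-defined, $\hat{S}$ is a closed surface of genus at least $2$, and $\hat{\varphi}\in\text{Flat}_1(\hat{S})$. The universal covers of $S$ and $\hat{S}$ are naturally identified as $\tilde{S}$ with a single Gromov boundary $S^1_{\infty}$, so both types of endpoint pairs live in the same space $\mathcal{G}(\tilde{S})$.

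By Proposition \ref{prop:basic-geodesics}, it suffices to show that the endpoint pair in $\mathcal{G}(\tilde{S})$ of any nonsingular $\varphi$-geodesic lift $\tilde{\ell}\subset\tilde{S}$ is a limit of endpoint pairs of lifts of cylinder curves. I project $\tilde{\ell}$ to a nonsingular geodesic $\ell\subset S$ and choose a lift $\hat{\ell}\subset\hat{S}$, which is a straight line on the translation surface $(\hat{S},\hat{\varphi})$ with a well-defined direction $\theta_0\in S^1$. Applying Theorem \ref{thm:vorobets} to $\hat{\varphi}$, the set of $\hat{x}\in\hat{S}$ through which directions of periodic geodesics are dense in $S^1$ has full Lebesgue measure, hence is dense in $\hat{S}$. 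Pick a sequence $\hat{x}_n$ of such points in $\hat{S}\setminus\text{cone}(\hat{\varphi})$ converging to a fixed basepoint on $\hat{\ell}$, together with periodic geodesics $\hat{\gamma}_n$ through $\hat{x}_n$ in directions $\theta_n\to\theta_0$.

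On a translation surface, a periodic geodesic passing through a regular point is a regular closed geodesic and hence lies in a maximal Euclidean cylinder of parallel closed geodesics. Since $\hat{\pi}$ is a local isometry on $\hat{S}\setminus\text{cone}(\hat{\varphi})$, it sends this cylinder to a family of parallel closed $\varphi$-geodesics in $S$, so $\gamma_n\defeq\hat{\pi}(\hat{\gamma}_n)\in\cyl(\varphi)$. Lifting $\hat{x}_n$ to a point $\tilde{x}_n\in\tilde{S}$ converging to the basepoint on $\tilde{\ell}$ and letting $\tilde{\gamma}_n$ be the lift of $\gamma_n$ through $\tilde{x}_n$ matching the direction $\theta_n$ (which is simultaneously a lift of $\hat{\gamma}_n$), I obtain the desired sequence of lifts of cylinder curves.

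The last step is to verify that the endpoint pairs of the $\tilde{\gamma}_n$ in $\mathcal{G}(\tilde{S})$ converge to that of $\tilde{\ell}$. The key point is that because $\tilde{\ell}$ is nonsingular, every compact subarc admits a uniform Euclidean tubular neighborhood in $\tilde{S}$; for $\tilde{x}_n$ sufficiently close to $\tilde{\ell}$ and $\theta_n$ sufficiently close to $\theta_0$, the Euclidean straight line $\tilde{\gamma}_n$ coincides with a nearly parallel line inside this strip, so it fellow-travels $\tilde{\ell}$ on arbitrarily long central subarcs. The standard criterion for convergence in the Gromov boundary of a proper CAT$(0)$ space then gives the required convergence of endpoint pairs. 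This last step is the main obstacle: packaging Vorobets' almost-everywhere statement on $\hat{S}$ into topological convergence of boundary pairs in $\mathcal{G}(\tilde{S})$ requires controlling how long the nonsingular portion of $\tilde{\ell}$ admits a Euclidean strip and invoking CAT$(0)$ fellow-travelling to transfer the metric approximation into the boundary topology.
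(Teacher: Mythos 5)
Your proof follows essentially the same route as the paper's: pass to the holonomy trivializing (branched) cover, apply Theorem \ref{thm:vorobets} there to approximate the direction of any nonsingular geodesic by directions of periodic geodesics through nearby regular points, push these cylinder geodesics back down to $S$, and conclude via Proposition \ref{prop:basic-geodesics}. The only difference is that you spell out the final step (convergence of endpoint pairs in $\mathcal{G}(\widetilde{S})$ via fellow-travelling in the $\CAT(0)$ universal cover), which the paper leaves implicit; just be slightly careful with the claim that the universal covers of $S$ and the branched cover are ``naturally identified,'' since the cover is branched over the cone points --- the argument is cleaner if you first project the cylinder geodesics to $S$ and only then lift to $\widetilde{S}$, as you in fact do.
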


\begin{proof}
Consider the holonomy trivializing cover $p: S' \to S$. By Theorem \ref{thm:vorobets}, the set of tangent vectors to cylinder geodesics in $S'$ are dense in $T^1(S')$. Note that this cover $p$ has derivative \[\text{d}p: T^1(S') \to T^1(S)\] that maps the dense set of tangent vectors to cylinder geodesics to a dense set of tangent vectors to cylinder geodesics. This is because $p$ maps cylinder geodesics in $S'$ to cylinder geodesics in $S$. Since $\text{supp}(L_{\varphi})$ consists of the endpoints of the closure of the set of nonsingular geodesics, and any nonsingular geodesic is a limit of lifts of cylinder geodesics, then the endpoints of lifts of cylinder geodesics are dense in $\text{supp}(L_{\varphi})$. \end{proof}

\section{Spectral Rigidity for $q$-simple Curves}

In this section we will prove the Main Theorem. The key is the following statement that cylinder curves for $\varphi \in \text{Flat}_q(S)$ are determined by $\varphi$-lengths of curves in $\mathcal{S}_q$. 

\begin{prop} \label{prop:cylcurve} If $\varphi_1, \varphi_2 \in \text{Flat}_q(S)$ and $\lambda_{\mathcal{S}_q}(\varphi_1) = \lambda_{\mathcal{S}_q}(\varphi_2)$, then $\cyl(\varphi_1) = \cyl(\varphi_2)$.\end{prop}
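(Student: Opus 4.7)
The goal is an intrinsic, $\lambda_{\mathcal{S}_q}(\varphi)$-computable criterion for whether a $q$-simple curve $\gamma$ belongs to $\cyl(\varphi)$. Since cylinder curves are themselves $q$-simple, the lengths of cylinder curves of either metric are already recorded in the shared length data, and $\ell_{\varphi_1}(\gamma)=\ell_{\varphi_2}(\gamma)$ for any $\gamma\in\cyl(\varphi_1)\cup\cyl(\varphi_2)$. The real content is distinguishing a cylinder core from a concatenation of saddle connections using only length information.

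My probe is the Dehn twist $T_\gamma$ applied to an auxiliary $q$-simple curve $\alpha$ crossing $\gamma$ transversely. When $\gamma$ bounds a flat cylinder of width $w>0$, the geodesic representative of $T_\gamma^n(\alpha)$ traverses that cylinder $i(\gamma,\alpha)$ times along straight Euclidean segments of length $\sqrt{(n\ell_\varphi(\gamma)+s_j)^2 + w^2}$, giving an expansion
\[
\ell_\varphi\bigl(T_\gamma^n(\alpha)\bigr) \;=\; n\,\ell_\varphi(\gamma)\,i(\gamma,\alpha) \;+\; C(\alpha,\varphi) \;+\; O(1/n),
\]
with $C(\alpha,\varphi)$ determined by $w$ and the entry/exit offsets $s_j$ along $\gamma$. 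When instead $\gamma$ is a concatenation of saddle connections, the analogous subleading term is governed by the cone angles along $\gamma$, which are constrained by the Gauss--Bonnet calculation of Proposition \ref{prop:gauss-bonnet} applied to a tubular neighborhood of $\gamma$. The two regimes produce $O(1)$ corrections with qualitatively different dependence on $\alpha$, and that difference is exactly the criterion I want to extract from length data.

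For the criterion to use only $\lambda_{\mathcal{S}_q}$, I need the twist powers $T_\gamma^n(\alpha)$ to themselves lie in $\mathcal{S}_q$ for an adequate supply of auxiliary $\alpha$. This is the principal obstacle, and where the $q$-differential structure really enters. I expect to handle it by passing to the holonomy almost-trivializing cover of $(S,\varphi)$: by Definition \ref{def:q-simple}, both $\gamma$ and $\alpha$ admit simple preimages there, and a suitable lift of $T_\gamma$ is a product of Dehn twists along the lifts of $\gamma$. Choosing the lift of $\alpha$ compatibly with the deck action keeps its image simple under these lifted twists, so its projection $T_\gamma^n(\alpha)$ remains $q$-simple. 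With this ingredient in place, the asymptotic criterion above is computable from $\lambda_{\mathcal{S}_q}(\varphi)$ alone, so the hypothesis $\lambda_{\mathcal{S}_q}(\varphi_1)=\lambda_{\mathcal{S}_q}(\varphi_2)$ forces $\gamma\in\cyl(\varphi_1)$ if and only if $\gamma\in\cyl(\varphi_2)$, completing the proof.
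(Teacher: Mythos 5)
Your overall strategy---wrap an auxiliary curve around the candidate many times and detect the cylinder from the growth of lengths---is the same as the paper's, but the proposal has gaps at exactly the points where the real work lies. First, $T_\gamma$ is not a well-defined mapping class when $\gamma$ is $q$-simple but not simple, which is the generic case. Your suggested repair (lift to the holonomy almost-trivializing cover and twist along the simple preimages) does not obviously descend to a homeomorphism of $S$, and even if an equivariant multitwist did descend, the curve it produces is not the one your length expansion analyzes. The paper avoids this entirely by replacing the Dehn twist with a based-loop product $D_{\alpha}(\beta,P)$ at one or two designated intersection points (Definition \ref{def:product}); the whole characterization (Proposition \ref{prop:character}) is phrased in terms of this product. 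Second, the assertion that the twisted images of a suitable auxiliary curve remain $q$-simple is stated but not proved; in the paper this is the most technical step (Lemma \ref{lemma:cylcurve}), requiring an explicit construction of $\beta$ from a nearly-horizontal dense ray in the lifted cylinder, a Gauss--Bonnet computation on a doubled triangle to pin the angles at cone points into $[\pi,\pi+\tfrac{\pi}{q}]$, and a verification that the resulting geodesic in the cover projects to a geodesic in $S$. ``Choosing the lift of $\alpha$ compatibly with the deck action'' does not substitute for this construction.

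Third, your proposed length criterion is not sharp enough to be checkable from $\lambda_{\mathcal{S}_q}$. You claim the two regimes differ in the ``qualitative dependence'' of the $O(1)$ term on the auxiliary curve, but you never specify what property of $C(\alpha,\varphi)$ you would test. The workable dichotomy---and the one the paper proves---is: for a cylinder curve the successive lengths satisfy the strict inequality $\ell_{\varphi}(D_{\alpha}^n(\beta,P)) < \ell_{\varphi}(D_{\alpha}^{n-1}(\beta,P)) + \ell_{\varphi}(\alpha)\cdot|P|$ (because the concatenated representative turns by an angle $<\pi$ inside the cylinder, so it is not geodesic), whereas for a non-cylinder curve one gets \emph{exact equality} for all sufficiently large $n$. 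Establishing that equality is a separate argument (Lemma \ref{lemma:notcylcurve}) using North--South dynamics of $\alpha$ on $S^1_{\infty}$ together with geodesic convexity of a region bounded by rays leaving $\widetilde{\alpha}$ at angle $\pi$ from singularities where the curve genuinely bends: for large $n$ the geodesic representative of $\alpha^{n}\beta\alpha^{n'}$ is forced to run along $\widetilde{\alpha}$ through a full fundamental domain, so one more power of $\alpha$ is surgered in with no shortcut. Your sketch contains no argument on the non-cylinder side beyond an appeal to Gauss--Bonnet on a tubular neighborhood, which does not yield the required exact additivity. Without both halves of the dichotomy stated as a single length-theoretic criterion, equality of the two $\mathcal{S}_q$-length spectra does not let you conclude $\cyl(\varphi_1)=\cyl(\varphi_2)$.
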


Assuming Proposition \ref{prop:cylcurve}, we are now ready to prove the Main Theorem. 

\begin{proof}[Proof of the Main Theorem]
Suppose $\lambda_{\mathcal{S}_q}(\varphi_1) = \lambda_{\mathcal{S}_q}(\varphi_2)$. By Proposition \ref{prop:cylcurve}, we have that $\cyl(\varphi_1) = \cyl(\varphi_2)$. By Corollary \ref{corollary:vorobets}, we know that the endpoints of cylinder geodesics for $\text{Flat}_q$ metrics are dense in the support of the Liouville current. Thus, $\text{supp}(L_{\varphi_1}) = \text{supp}(L_{\varphi_2})$. 

When $|\text{Hol}(\varphi_1)| >2 $, we can apply Theorem \ref{thm:support-rigidity} to conclude that $\varphi_1$ and $\varphi_2$ differ by an isometry, isotopic to the identity. That is, $\varphi_1$ is equal to $\varphi_2$ in $\text{Flat}_q(S)$. 

On the other hand, suppose $|\text{Hol}(\varphi_1)| \leq 2$. Theorem \ref{thm:support-rigidity} tells us that $\varphi_1$ and $\varphi_2$ differ by an affine deformation up to isotopy. Note that an affine deformation preserves holonomy. Thus, $|\text{Hol}(\varphi_1)| = |\text{Hol}(\varphi_2)|$ and hence, $\varphi_1, \varphi_2 \in \text{Flat}_2(S)$ and we can conclude that $\varphi_1 = \varphi_2$ by Theorem 1 of \cite{duchleinrafi10}\end{proof}

\subsection*{Characterization of Cylinder Curves} 

The proof of Proposition \ref{prop:cylcurve} will follow a similar strategy to that of Duchin--Leininger--Rafi in Section 3 of \cite{duchleinrafi10}. In particular, Duchin--Leininger--Rafi characterize cylinder curves by comparing the lengths of images of simple closed curves under Dehn twists about cylinder curves to the lengths of their images under Dehn twists about non-cylinder curves. Note that a Dehn twist is a way to combine two intersecting curves (one of which is simple) via surgery at every point of intersection. However, since we are not working in the context of simple closed curves we will need to find a different way to combine curves. We will use the notion of a product of two curves at specified points of intersection. 

Given two curves $\alpha, \beta \in \mathcal C$ such that $i (\alpha, \beta) \neq 0$, let $\mathcal P (\alpha, \beta)$ be the collection of all points of intersection and all pairs of points of intersection between $\alpha$ and $\beta$. Note that $P \in \mathcal P(\alpha, \beta)$ is either a point $\{p\}$ or a pair of points $\{p, r\}$. 

\begin{mydef} \label{def:product} Let $\alpha, \beta \in \mathcal C$ such that $i(\alpha, \beta) \neq 0$ and let $P \in \mathcal P(\alpha, \beta)$. There are two cases, either $P = \{p\}$ or $\{p, r\}$. In either case, orient $\alpha$ and $\beta$ so that the intersection point $p$ contributes $+1$ to the algebraic intersection number $\left<\alpha, \beta \right>$. If $P = \{p\}$, then we will define $D_{\alpha}(\beta, P)$ to be the usual product of based loops $\alpha$ and $\beta$ based at $p$. If $P = \{p, r\}$, then we will define $D_{\alpha}(\beta, P)$ as follows. Let $\beta_0$ be the segment of $\beta$ starting  at $p$ and ending at $r$ and let $\beta_1$ be the segment of $\beta$ starting at $r$ and ending at $p$. Then $D_{\alpha}(\beta, P)$ is the concatenation $\alpha \cdot \beta_0 \cdot \alpha^{\varepsilon} \cdot \beta_1$ (which is a loop based at $p$), where $\varepsilon$ is the sign of the intersection between $\alpha$ and $\beta$ at $r$. \end{mydef}

Note that there are two choices of orientation on $\alpha$ and $\beta$ which make $p$ positive. However, the resulting products only differ by reversing orientation. So this product is well-defined, independent of choice, as an unoriented curve. Furthermore, if $\alpha$ is simple and $i(\alpha, \beta) = 1$ or $2$, then $D_{\alpha}(\beta, P)$, where $P = \{ \alpha \cap \beta \}$, is simply the Dehn twist of $\beta$ about $\alpha$. Note that we can also consider the $n$-th power of this product $D_{\alpha}^n(\beta, P)$ for any $P \in \mathcal P(\alpha, \beta)$, which is simply the product $D_{\alpha^n}(\beta, P)$. When the choice of $P \in \mathcal P(\alpha, \beta)$ is clear we will simple denote the product $D_{\alpha}^n(\beta, P)$ by $D_{\alpha}^n(\beta)$. 

We can now characterize whether a $q$-simple curve is a cylinder curve for some $\text{Flat}_q(S)$ metric by examining the length of its product with powers of $q$-simple curves. In particular, we will prove the following.

\begin{prop} \label{prop:character} For any $\alpha \in \mathcal{S}_q$ and any $\varphi \in \text{Flat}_q(S)$, we have $\alpha \in \cyl(\varphi)$ if and only if there exists $\beta \in \mathcal{S}_q$, $P \in \mathcal P(\alpha, \beta)$, and $N > 1$ such that for all $n \geq N$, we have $D_{\alpha}^n(\beta, P) \in \mathcal{S}_q$ and the following condition holds: \[ \ell_{\varphi}(D_{\alpha}^n(\beta, P)) < \ell_{\varphi}(D_{\alpha}^{n-1}(\beta, P)) + \ell_{\varphi}(\alpha) \cdot  |P|.\]\end{prop}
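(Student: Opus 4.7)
The strategy is to prove both implications via length comparison, building on the general subadditivity
\[\ell_{\varphi}(D_{\alpha}^n(\beta, P)) \leq \ell_{\varphi}(D_{\alpha}^{n-1}(\beta, P)) + \ell_{\varphi}(\alpha) \cdot |P|,\]
which holds because the $n$-th product admits a (non-geodesic) representative obtained by concatenating the $(n-1)$-th product with one additional copy of $\alpha$ at each of the $|P|$ base points, and the geodesic representative can only be shorter. Thus the whole proposition reduces to characterizing exactly when this subadditivity is strict for all sufficiently large $n$.

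For the forward direction, suppose $\alpha \in \cyl(\varphi)$, so $\alpha$ is the core of an embedded Euclidean cylinder $C$ of positive width $w$ and circumference $\ell = \ell_{\varphi}(\alpha)$. Choose $\beta \in \mathcal{S}_q$ crossing $C$ so that $D_{\alpha}^n(\beta, P)$ stays $q$-simple for an appropriate $P$ (as discussed below). In the infinite cyclic cover of $C$, the geodesic representative of $D_{\alpha}^n(\beta, P)$ lifts so that near each point of $P$ the path cuts diagonally across the cylinder while winding $n$ times. A direct Pythagorean computation shows this diagonal has length $\sqrt{(n\ell+s)^2+w^2}$ for some parameter $s$, which is strictly less than $(n\ell+s)+w$. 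Summing over the $|P|$ intersections and comparing with the corresponding lift for $D_{\alpha}^{n-1}(\beta,P)$ produces the strict inequality.

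For the converse, argue the contrapositive: assume $\alpha \notin \cyl(\varphi)$, and show that for every admissible $\beta$, $P$, and $N$ the inequality must fail for some $n \geq N$. By Proposition \ref{prop:bankovic}, the geodesic representative $\bar\alpha$ is unique and passes through at least one cone point at which a cone angle strictly exceeds $\pi$ on one side. Consequently, the geodesic representative of $D_{\alpha}^n(\beta,P)$ is forced, for large $n$, to contain subarcs tracking $\bar\alpha^n$ essentially verbatim, separated by fixed excursions along $\beta$. Any length saving from smoothing at the concatenation vertices is uniformly bounded independent of $n$, while the winding contribution is asymptotically $n\,\ell_{\varphi}(\alpha)\,|P|$. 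Hence $\ell_{\varphi}(D_{\alpha}^n(\beta,P)) - \ell_{\varphi}(D_{\alpha}^{n-1}(\beta,P)) \to \ell_{\varphi}(\alpha)\,|P|$ as $n \to \infty$, so the strict inequality cannot persist.

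The main obstacle, flagged by the acknowledgment, is verifying that the products $D_{\alpha}^n(\beta,P)$ actually lie in $\mathcal{S}_q$, since this is a global condition requiring a simple closed lift to the holonomy almost-trivializing branched cover. When $q$ is odd the almost-trivializing cover fully kills the holonomy and the usual Dehn-twist surgery argument on simple lifts goes through. When $q$ is even the cover only kills holonomy modulo $r_\pi$, so a naive lift of the twisted product can acquire a $\mathbb{Z}/2$-monodromy that breaks simplicity; handling this likely requires choosing $|P|=2$ (with the two intersections paired to cancel the offending $r_\pi$) and performing the simple-curve surgery directly in the cover. Once $q$-simplicity of the products is secured, the length estimates in each direction are essentially Euclidean geometry in the cylinder, together with a compactness/uniqueness argument for non-cylinder geodesics.
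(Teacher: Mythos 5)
Your overall architecture matches the paper's: the forward direction comes from Euclidean geometry in the cylinder forcing a strict shortening, and the converse is proved contrapositively by showing that for $\alpha \notin \cyl(\varphi)$ the twisted curves eventually track $\alpha$. But there are two genuine gaps. First, in the forward direction you defer precisely the step that carries the content of the paper's Lemma \ref{lemma:cylcurve}: producing a $\beta$ for which $D_{\alpha}^n(\beta,P) \in \mathcal{S}_q$ for all $n$. Membership in $\mathcal{S}_q$ requires a simple closed curve in the holonomy almost trivializing cover whose \emph{geodesic} representative projects to the \emph{geodesic} representative downstairs, so one must construct $\beta'$ in the cover that is simultaneously simple, geodesic, and such that its projection is still geodesic. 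The paper does this by lifting the cylinder to a cylinder $A'$ in the cover, shooting a dense ray from a cone point at angle at most $\frac{\pi}{k}$ with $\partial A'$, and using Gauss--Bonnet on a doubled triangle to force every cone angle along the resulting curve to lie in $[\pi, \pi + \frac{\pi}{q}]$ on one side; only then does the projection remain geodesic. Your sketch also assumes the cylinder is embedded, which the paper explicitly warns is false in general (its Case 2 handles a cylinder meeting itself along a saddle connection by a different construction of $\beta'$), and your guess about the even-$q$ case ($|P|=2$ when the transversal enters and exits through the same boundary component of $A'$) is directionally right but not carried out.

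Second, and more seriously, your converse argument proves the wrong kind of statement. You conclude that $\ell_{\varphi}(D_{\alpha}^n(\beta,P)) - \ell_{\varphi}(D_{\alpha}^{n-1}(\beta,P)) \to \ell_{\varphi}(\alpha)\,|P|$, and infer that ``the strict inequality cannot persist.'' That inference is invalid: the differences could equal $\ell_{\varphi}(\alpha)|P| - \frac{1}{n}$, converging to the limit while the strict inequality holds for every $n$. What is needed, and what the paper proves, is \emph{exact} equality for all sufficiently large $n$. This requires showing that the geodesic representative of $\alpha^{n+1}\beta\alpha^{n'}$ is obtained from that of $\alpha^{n}\beta\alpha^{n'}$ by literally surgering in one full copy of $\alpha$; the paper gets this from North--South dynamics (placing the endpoints of the axis of $\alpha^{n}\beta\alpha^{n'}$ into prescribed arcs $A^{\pm}$ at infinity) combined with a geodesic convexity argument, using singularities of $\widetilde{\alpha}$ with excess angle on each side, to force that axis to coincide with $\widetilde{\alpha}$ along an entire fundamental domain of the $\alpha$-action. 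A uniform bound on the ``smoothing savings'' plus an asymptotic count, as you propose, does not yield this.
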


Note that Proposition \ref{prop:character} characterizes cylinder curves in terms of lengths of curves in $\mathcal{S}_q$. Thus, Proposition \ref{prop:cylcurve} follows as an immediate corollary. In order to complete the proof of the Main Theorem, we are only left to prove Proposition \ref{prop:character}. The forward implication will be proven in Lemma \ref{lemma:cylcurve} and the reverse implication in Lemma \ref{lemma:notcylcurve}.

\begin{lemma} \label{lemma:cylcurve} Let $\varphi \in \text{Flat}_q(S)$. If $\alpha \in \cyl(\varphi)$, then there exists $\beta \in \mathcal{S}_q$ with $i(\alpha, \beta) \neq 0$ and there exists $P \in \mathcal P(\alpha, \beta)$, such that for all $n \geq 1$, \[D_{\alpha}^n(\beta, P) \in \mathcal{S}_q \text{ and } \ell_{\varphi}(D_{\alpha}^n(\beta, P) < \ell_{\varphi}(D_{\alpha}^{n-1}(\beta, P)) + \ell_{\varphi}(\alpha)\cdot |P|.\] \end{lemma}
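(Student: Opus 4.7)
The plan is to construct $\beta$ and $P$ in the holonomy almost trivializing cover $p\colon S' \to S$ so that $D_\alpha^n(\beta, P)$ lifts to a Dehn twist of a simple closed curve on $S'$, and then to prove the length inequality using the strict Euclidean triangle inequality in the Euclidean strip that universally covers the cylinder with core $\alpha$. Concretely, since $\alpha \in \cyl(\varphi)$, there is a maximal Euclidean cylinder $C \subset (S,\varphi)$ of circumference $w = \ell_\varphi(\alpha)$ and positive height $h$. Because parallel transport around a cylinder curve preserves the tangent direction, $\alpha$ has trivial holonomy and lifts to a simple closed curve $\tilde{\alpha} \subset S'$. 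Since $S'$ has genus at least $2$, standard surface topology supplies a simple closed curve $\tilde{\beta} \subset S'$ with $k := i(\tilde{\alpha}, \tilde{\beta}) \in \{1,2\}$, taking $k = 1$ if $\tilde{\alpha}$ is non-separating and $k = 2$ otherwise (both sides of a separating $\tilde{\alpha}$ have positive genus). I set $\beta := p(\tilde{\beta}) \in \mathcal{S}_q$, $\tilde{P} := \tilde{\alpha} \cap \tilde{\beta}$, and $P := p(\tilde{P}) \subset \alpha \cap \beta$; since $p$ is a local isometry carrying $\tilde{\varphi}$-geodesics to $\varphi$-geodesics, $i(\alpha,\beta) > 0$.

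For the $q$-simplicity of $D_\alpha^n(\beta, P)$, I observe that the Dehn twist $T_{\tilde{\alpha}}^n(\tilde{\beta})$ is a simple closed curve in $S'$, freely homotopic to $D_{\tilde{\alpha}}^n(\tilde{\beta}, \tilde{P})$; indeed in the word presentations (either $\tilde{\alpha}^n \tilde{\beta}$ when $k=1$, or $\tilde{\alpha}^n \tilde{\beta}_0 \tilde{\alpha}^{n\varepsilon} \tilde{\beta}_1$ when $k=2$) the two loops differ only by cyclic conjugation by $\tilde{\alpha}^n$, which preserves the free homotopy class. Its projection to $S$ is $D_\alpha^n(\beta, P)$, so $D_\alpha^n(\beta, P)$ admits a simple lift to $S'$ and thus belongs to $\mathcal{S}_q$.

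For the length inequality, let $\gamma_{n-1}$ denote the $\varphi$-geodesic representative of $D_\alpha^{n-1}(\beta, P)$; since this class has positive intersection with $\alpha$, the curve $\gamma_{n-1}$ crosses $C$ near each of the $|P|$ points contributing to the product. I build a representative $\gamma_n'$ of $D_\alpha^n(\beta, P)$ from $\gamma_{n-1}$ by inserting an additional core loop of $\alpha$ (length $w$) at each such crossing, giving $\ell_\varphi(\gamma_n') = \ell_\varphi(\gamma_{n-1}) + |P| \cdot w$. In the universal cover $\widetilde{C} \cong \mathbb{R} \times [0,h]$ of $C$, each insertion produces a non-trivial corner whose straightening strictly shortens the path by the Euclidean triangle inequality, which is strict because $h > 0$. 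Therefore $\ell_\varphi(D_\alpha^n(\beta,P)) < \ell_\varphi(\gamma_n') = \ell_\varphi(D_\alpha^{n-1}(\beta,P)) + \ell_\varphi(\alpha) \cdot |P|$, as required. The main obstacle is the $q$-simplicity assertion, and the key idea that overcomes it is realizing the product $D_\alpha^n(\beta, P)$ as (the projection of) a Dehn twist of simple curves in the cover $S'$.
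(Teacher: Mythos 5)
There is a genuine gap at the heart of your argument: the claim that the branched covering $p$ ``is a local isometry carrying $\tilde{\varphi}$-geodesics to $\varphi$-geodesics'' is false at the branch points, and this is precisely the difficulty the lemma has to overcome. At a cone point of $(S,\varphi)$ of angle $\theta$, a preimage in $S'$ has cone angle $d\theta$ for the local degree $d$, and a geodesic upstairs passing through that point only satisfies the angle-$\geq\pi$ condition with respect to the \emph{larger} cone angle; after projecting, the angle on one side can drop below $\pi$, so the image need not be a geodesic. Since Definition \ref{def:q-simple} requires the \emph{geodesic representative} of a simple closed curve in the cover to project to the geodesic representative downstairs, an arbitrary topological dual curve $\tilde{\beta}$ supplied by ``standard surface topology'' gives you no way to conclude that $\beta=p(\tilde{\beta})\in\mathcal S_q$, nor that $D_\alpha^n(\beta,P)\in\mathcal S_q$. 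This is exactly what the paper's construction is engineered to fix: $\beta'$ is built from a ray leaving a boundary cone point of the lifted cylinder $A'$ at angle at most $\pi/k$, and a Gauss--Bonnet computation on the doubled geodesic triangle shows that for $k$ large the resulting geodesic meets every cone point with angle in $[\pi,\pi+\pi/q]$ on one side; since cone angles downstairs are multiples of $2\pi/q$ exceeding $2\pi$, such angles survive projection with both sides still at least $\pi$, so the image is geodesic and the curve (and all its products with powers of $\alpha$) is genuinely $q$-simple. None of this is present in your proposal, and without it the $q$-simplicity assertions are unsupported.

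Your length inequality, by contrast, is essentially the paper's argument (concatenate the geodesic representative of $D_\alpha^{n-1}(\beta,P)$ with copies of the core of the cylinder at the designated intersection points, observe that the resulting corner has angle strictly less than $\pi$ inside the cylinder, and shorten), and the topological identification of $D_{\tilde{\alpha}}^n(\tilde{\beta},\tilde{P})$ with a power of a Dehn twist upstairs is consistent with the paper's own remarks. But note that the paper must also treat separately the case where the lifted cylinder $A'$ is not embedded and meets itself along a saddle connection (where $\beta'$ is instead taken to be a second cylinder curve crossing $A'$), as well as the case of even $q$, where the holonomy almost trivializing cover can force $|P|=2$; your proposal does not engage with either of these.
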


Before beginning the proof of Lemma \ref{lemma:cylcurve} we will give a brief preview of the main strategy. Fix a flat metric $\varphi \in \text{Flat}_q(S)$. Given a cylinder curve $\alpha$ on $(S, \varphi)$ we will construct a $q$-simple curve $\beta$ such that $D_{\alpha}^n(\beta)$ is $q$-simple for all $n$ and so that $D_{\alpha}^n(\beta)$ satisfies the necessary inequality on its length. To construct $\beta$ we will pass to the holonomy almost trivializing cover $f: (S' \varphi') \to (S, \varphi)$ where we will use a cylinder curve $\alpha'$, which maps to $\alpha$ under $f$, to construct a closed geodesic $\beta'$, so that $\beta'$ along with the product of $\beta'$ with powers of $\alpha'$ are mapped by $f$ to closed geodesics on $S$. We then use the fact that cylinder curves live in cylinders with Euclidean geometry, and hence, geodesics cannot make ``sharp turns" inside of cylinders, to finish the proof. 

\begin{proof} Let $\varphi \in \text{Flat}_q(S)$. There are two cases, depending on the parity of $q$. This is a reflection of our definition of $q$-simple curves. Recall that when $q$ is odd the holonomy almost trivializing branched cover coincides with the holonomy trivializing cover. Thus, it is illuminating to focus on the slightly simplified situation for odd $q$ first before mentioning the small complication introduced when $q$ is even (which we will do at the end of the proof). 

Suppose $q$ is odd. Since $\alpha \in \cyl(\varphi)$, we have a maximal isometrically immersed cylinder $A \subset S$ with core curve $\alpha$. Consider the holonomy trivializing branched cover $f: (S', \varphi') \to (S, \varphi)$ branched over the cone points. Throughout the proof we will use prime notation to denote when we are in the holonomy trivializing branched cover. Let $A' \subset X$ be a cylinder mapping to $A$ by $f$ with core curve $\alpha'$. We will use $A'$ to construct $\beta$. The proof divides into two more subcases depending on if and how $A'$ intersects itself. Note the interior of $A'$ is always embedded, but $A'$ may not be. 

\medskip

\noindent {\bf Case 1:} {\it $A'$ is embedded or meets itself in a finite number of cone points .} We will construct a simple closed curve $\beta' \subset S'$ whose geodesic representative projects to a geodesic in $S$ and which intersects $\alpha'$ exactly once. 

Consider a cone point $c_0$ on one boundary component of $A'$. For each $k > 0$, choose a dense ray emanating from $c_0$ making angle less than $\frac{\pi}{k}$ with the boundary of $A'$, which meets no other cone points. We will denote the point where this ray returns to the other boundary component of $A'$ by $c_k^0$ and we will denote the arc of this ray between $c_0$ and $c_k^0$ by $\delta_k'$. Note that $c_k^0$ is not a cone point because of our choice of ray. Let $c_k$ be the first cone point to the right of $c_k^0$. We will denote the segment from $c_k^0$ to $c_k$ along the boundary of $A'$ by $\sigma_k'$. This is illustrated in Figure \ref{fig:segmentemb}. 

\begin{figure}[!htbp]
\centering
\def\svgwidth{3in}
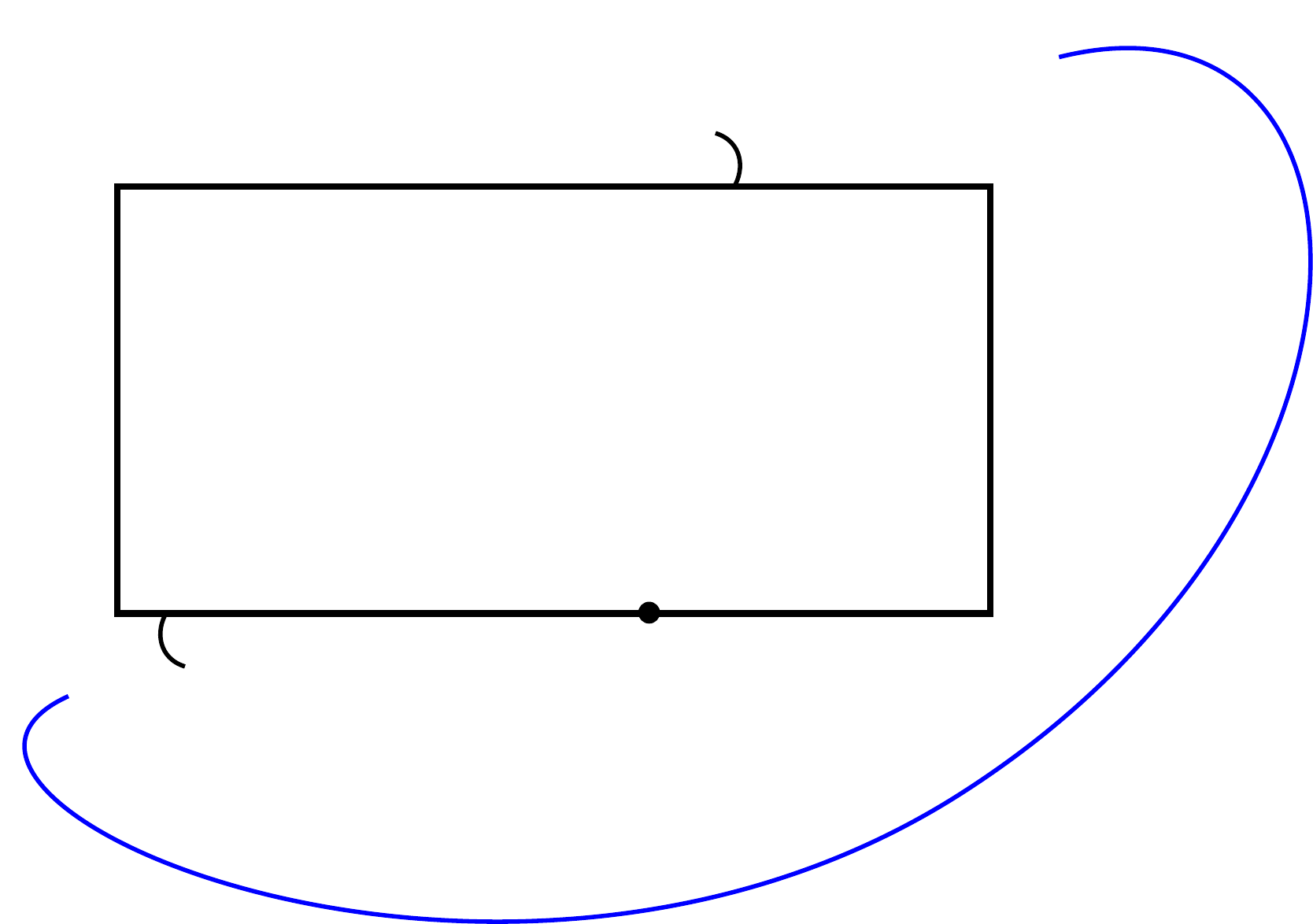
\caption{Segment $\delta_k' \cdot \sigma_k'$ from $c_0$ to $c_n$}
\label{fig:segmentemb}
\end{figure}

Now let $\gamma_k'$ be the geodesic representative of the homotopy class of $\delta_k' \cdot \sigma_k'$ rel. endpoints. Lift $\gamma_k'$, $\delta_k'$, and $\sigma_k'$ to the universal cover and consider the triangle they form. Denote the angles of the triangle by $\alpha_1,$ $\alpha_2,$ and $\alpha_3$ as shown in Figure \ref{fig:triangle}. By choosing $\delta_k'$ to be part of a ray with no singular points other than $c_0$ we ensured that this triangle does not degenerate.

\begin{figure}[!htbp]
\centering
\def\svgwidth{4in}
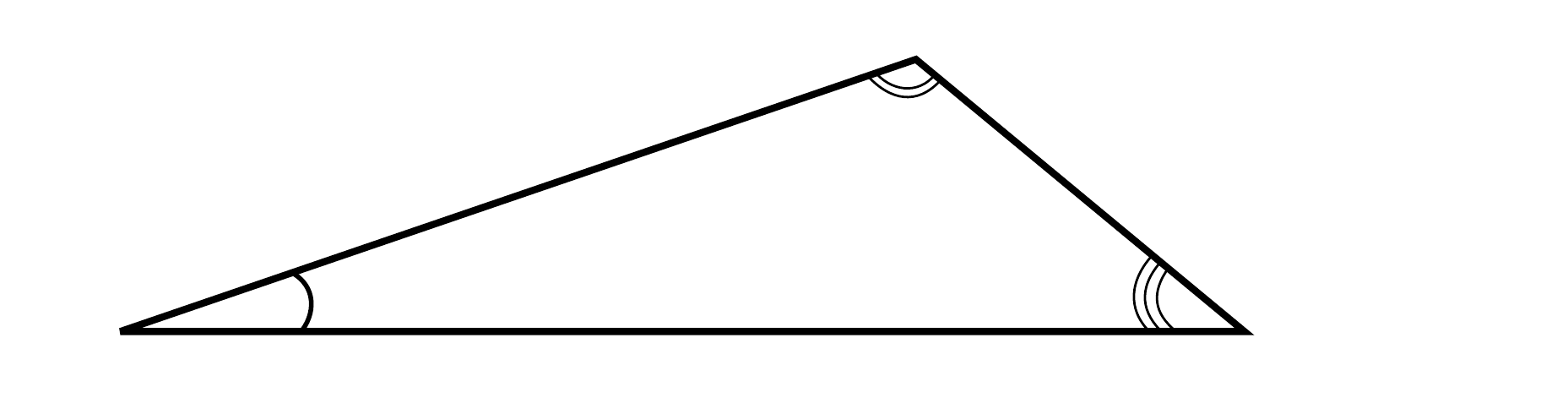
\caption{Triangle formed by the lifts of $\gamma_k'$, $\delta_k'$, and $\sigma_k'$}
\label{fig:triangle}
\end{figure}

Now consider the double of the triangle formed by the lifts $\widetilde{\gamma_k}, \widetilde{\delta_k}$, and $\widetilde{\sigma_k}$, which is a sphere, call it $R$, with the induced Euclidean cone metric. Note that the Euler characteristic of $R$ is $2$ and so by the Gauss--Bonnet formula given in Proposition \ref{prop:gauss-bonnet} we have \[-4 \pi = \sum_{x \in X} (c(x) - 2\pi),\] where we let $X$ be the set of cone points on $R$ with its induced Euclidean cone metric. Three of these cone points come from the vertices of the triangle and have cone angle $2 \alpha_i$, $i = 1, 2, 3$. The rest of the points in $X$ come from cone points, $x_1, \ldots x_j$ along $\widetilde{\gamma_k'}$ and have cone angles $> 2\pi$. (The fact that there are no cone points in the interior of the triangle follows from another application of Gauss--Bonnet.) So we have the following equality \[-4 \pi = 2(\alpha_1 + \alpha_3) - 4\pi + (2 \alpha_2 - 2 \pi) + \sum_{i = 1}^{j} (c(x_i) - 2 \pi).\] Rearranging we obtain \[0 = 2(\alpha_1 + \alpha_3) + (2 \alpha_2 - 2 \pi) + \sum_{i = 1}^j (c(x_i) - 2 \pi).\] 

Note that as $k \to \infty$, $\alpha_2 \to \pi$. Thus, we have that $\alpha_1, \alpha_3 \to 0$ as $k \to \infty$. Furthermore, $c(x_i) \to 2 \pi$ for all $1 \leq i \leq j$. 

Now choose $k > q$ sufficiently large so that $\pi + \frac{\pi}{q} \geq \frac{c(x_i)}{2} \geq \pi$ for each $1 \leq i \leq j$.  Connect $c_0$ to $c_k$ by a geodesic arc $d_k'$ inside of $A'$ that leaves $c_0$ at an angle of at most $\frac{\pi}{k}$. Since $k > q$, we have that $\gamma_k' \cdot d_k'$ makes an angle of at least $\pi$ on both sides of the cone points $c_0$ and $c_k$ and less than $\pi + \frac{\pi}{q}$ on one side of each of $c_0$ and $c_k$. In particular, $\gamma_k' \cdot d_k'$ is a geodesic, since $\gamma_k'$ is a geodesic and we have shown that at $c_0$ and $c_n$ it has angle at least $\pi$ on each side. 

Furthermore, the cone points $x_i$ together with $c_0$ and $c_k$ are the only cone points on $\gamma_k' \cdot d_k'$. So the choice of sufficiently large $k$ such that $\pi + \frac{\pi}{q} \geq \frac{c(x_i)}{2} \geq \pi$ together with the argument in the previous paragraph, allows us to conclude that at every cone point it meets $\gamma_k' \cdot d_k'$ makes angle between $\pi$ and $\pi +\frac{\pi}{q}$ on one side.

Let $\beta' = \gamma_k' \cdot d_k'$. Note that $\beta'$ is the geodesic representative of a simple closed curve, since $\beta'$ is homotopic to $\delta_k' \cdot \sigma_k' \cdot d_k'$. Consider the image of $\beta'$ under $f$ which we will denote by $\beta$. We will show that $\beta$ is indeed a geodesic on $S$. To do so we will use the fact that at every cone point it meets $\beta'$ makes angle at least $\pi$ and at most $\pi +\frac{\pi}{q}$ on one side. Since cone angles on $S$ have angle $\frac{2 \pi j}{q}$ for some $j > q$, then $\beta$ will also make angle at least $\pi$ and at most $\pi + \frac{\pi}{q}$ on one side of each cone point it meets, hence the cone angle on the other side is also at least $\pi$. Thus, $\beta$ is geodesic, as desired. 

\begin{figure}[!htbp]
\centering
\def\svgwidth{3in}
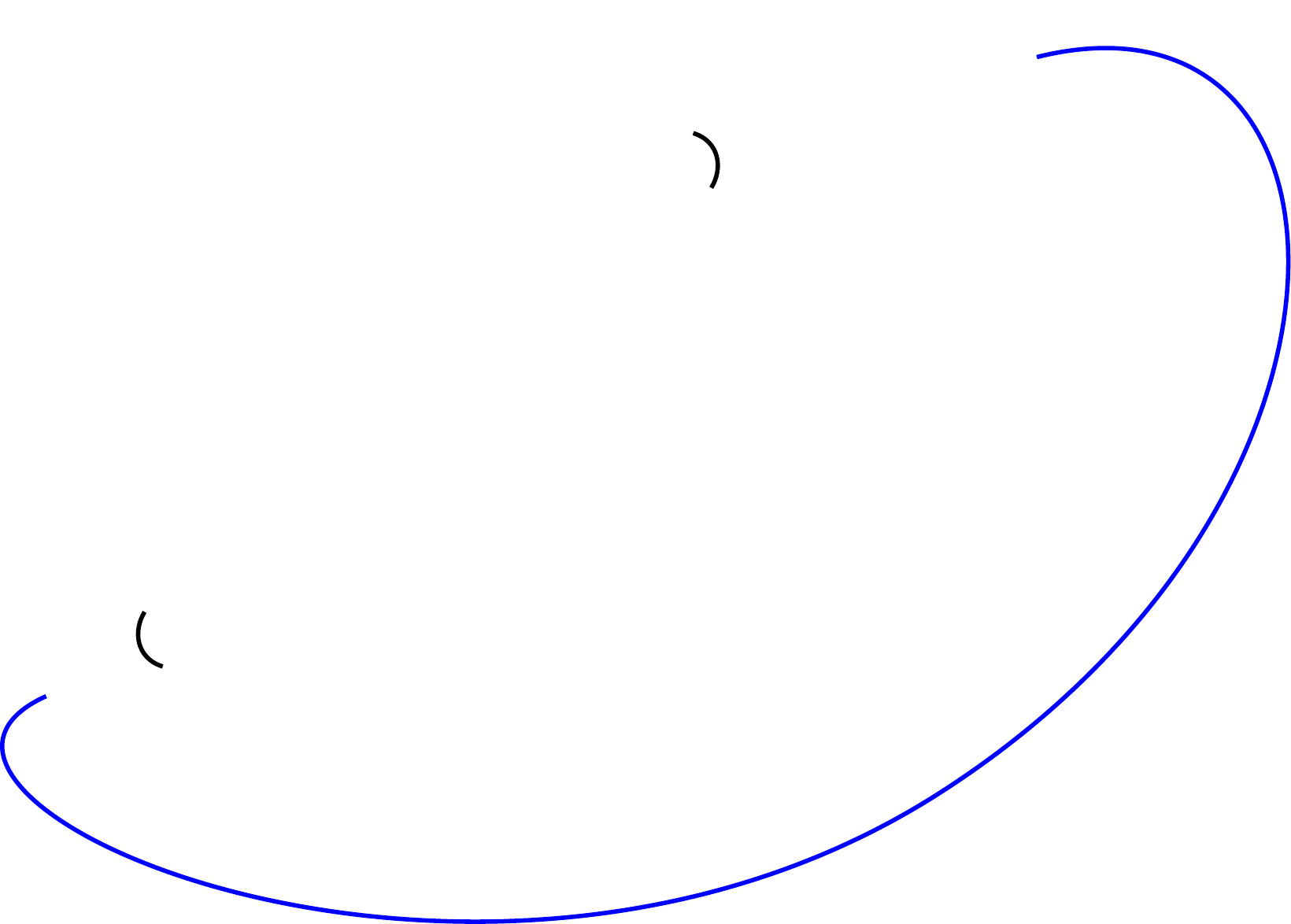
\caption{Geodesic arc in $A'$ connecting $c_0$ to $c_k$}
\label{fig:segmentembdiag}
\end{figure}

Since $\beta'$ nontrivially intersects with $\alpha'$, then $\beta$ nontrivially intersects $\alpha$. In fact, let $p'$ be the point of intersection between $\beta'$ and $\alpha'$. Then $p = f(p')$ will be the designated point of intersection and we will consider the product $D_{\alpha}^n(\beta, P)$ for $P = \{p\} \in \mathcal P(\alpha, \beta)$. 

Note that the geodesic representative of $D_{\alpha}^{n-1}(\beta)$ is obtained by replacing $d_k'$ in the construction of $\beta'$ (and hence, $\beta$) with an arc traversing the length of $A'$ exactly $n-1$ more times than $d_k'$ did. In particular, $D_{\alpha}^{n-1}(\beta) \in \mathcal{S}_q$. Furthermore, the geodesic representative of $D_{\alpha}^{n-1}(\beta)$ intersects $A$ in a geodesic arc with endpoints on the boundary of $A$. So we can construct a representative of $D_{\alpha}^n(\beta)$ by concatenating the geodesic representative of $D_{\alpha}^{n-1}(\beta)$ with the geodesic representative of $\alpha$ at $f(p')$.  The constructed representative of $D_{\alpha}^n(\beta)$, illustrated in Figure \ref{fig:alphabeta}, has length exactly $\ell_{\varphi}(D_{\alpha}^{n-1}(\beta)) + \ell_{\varphi}(\alpha)$. Clearly this representative is not geodesic since the angle formed inside of $A$ by $\alpha$ and $D_{\alpha}^{n-1}(\beta)$ at $p$ is strictly less than $\pi$. Thus, $\ell_{\varphi}(D_{\alpha}^n(\beta)) < \ell_{\varphi}(D_{\alpha}^{n-1}(\beta)) + \ell_{\varphi}(\alpha)$.

\begin{figure}[!htbp]
\centering
\def\svgwidth{3in}
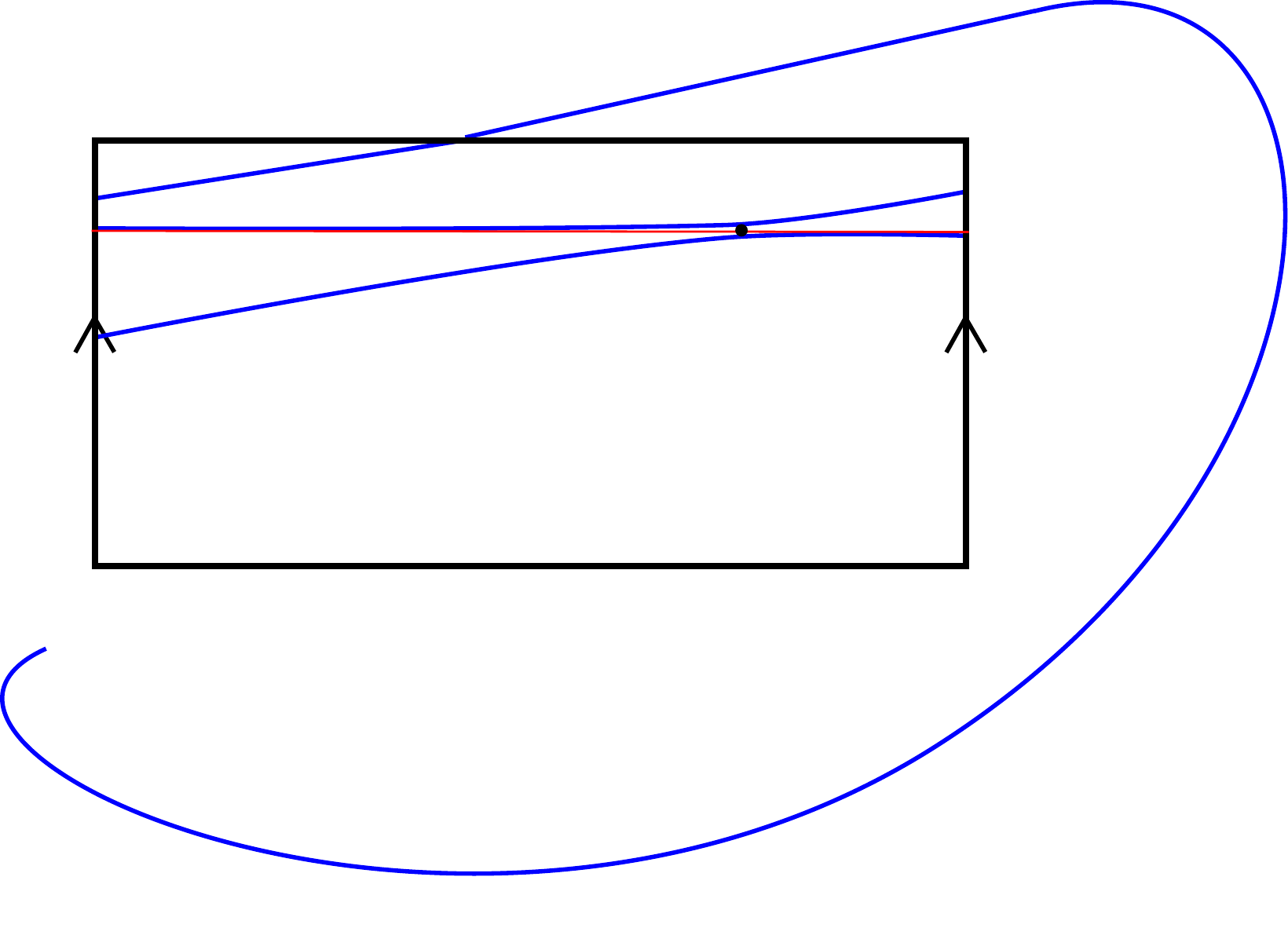
\caption{A representative of $D_{\alpha}^{n}(\beta)$ in blue and $\alpha$ in red}
\label{fig:alphabeta}
\end{figure}

\medskip

\noindent {\bf Case 2:} {\it $A'$ is not embedded and meets itself in at least one full saddle connection.} Since $A'$ meets itself in one full saddle connection, then we can build another cylinder curve $\beta'$ by taking a geodesic arc in $A'$ which begins and ends at the same point on one of the shared saddle connections on the boundary of $A'$. This new curve $\beta'$ is illustrated in Figure \ref{fig:immersed}. Now let $\beta = f(\beta')$. As above, since $\beta'$ nontrivially intersects with $\alpha'$ in some point $p'$, then $\beta$ nontrivially intersects $\alpha$ at $p = f(p')$ and we can consider the product $D_{\alpha}^n(\beta, P)$ for $P = \{p\} \in \mathcal P(\alpha, \beta)$. 

Note that $\beta$ is a cylinder curve in $S$. In fact, the cylinder for $\beta$ is built from a parallelogram contained in $A$, since $\beta$ is a geodesic arc in $A$ with endpoints that meet on a shared saddle connection on the boundaries of $A$, see Figure \ref{fig:immersed}. The geodesic representative of $D_{\alpha}^{n-1}(\beta)$ is obtained in the same way as in Case 1, namely it is obtained by replacing the geodesic arc in $A'$ used to construct $\beta'$ by a geodesic arc that traverses the length of $A'$ exactly $n-1$ more times than the original arc. In particular, $D_{\alpha}^{n-1}(\beta) \in \mathcal{S}_q$ and is also a cylinder curve. Thus, we can construct a representative of $D_{\alpha}^n(\beta)$, whose length is exactly $\ell_{\varphi}(D_{\alpha}^{n-1}(\beta)) + \ell_{\varphi}(\alpha)$, by concatenating $D_{\alpha}^{n-1}(\beta)$ and $\alpha$ at $f(p')$. Just as in Case 1, an angle argument tells us that this representative is not geodesic and thus, $\ell_{\varphi}(D_{\alpha}^n(\beta)) < \ell_{\varphi}(D_{\alpha}^{n-1}(\beta)) + \ell_{\varphi}(\alpha)$.

\begin{figure}[!htbp]
\centering
\def\svgwidth{2.5in}
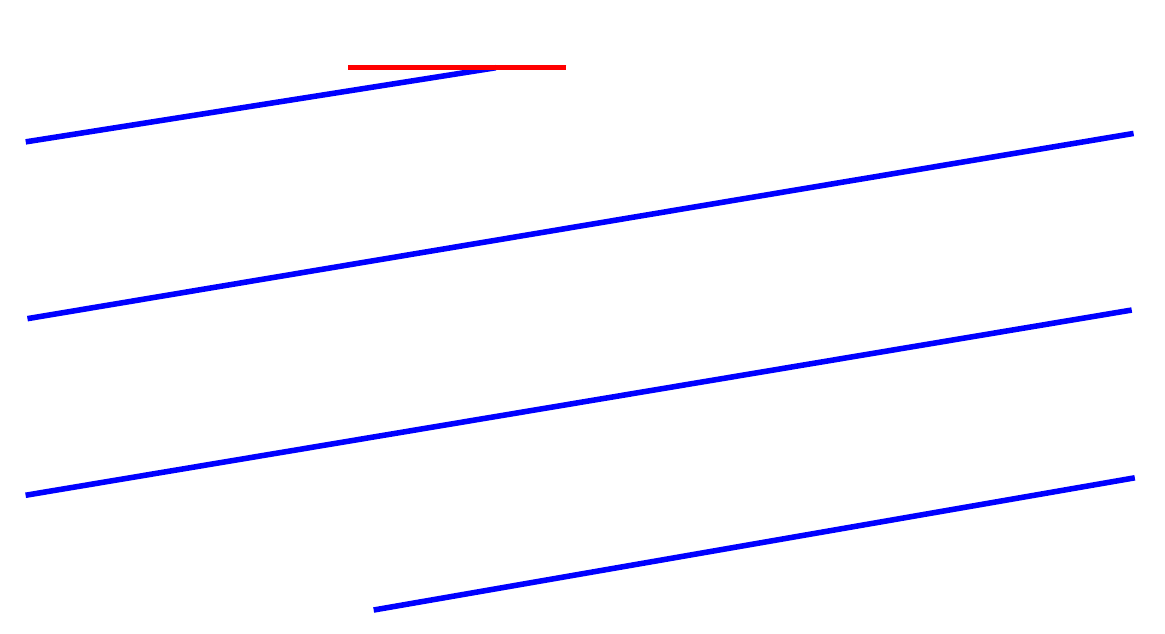
\caption{Constructing $\beta'$ when $A'$ meets itself in a saddle connection}
\label{fig:immersed}
\end{figure}

Now suppose that $q$ is even. We can carry out a nearly identical construction as we did for $q$ odd, only this time we use the holonomy almost trivializing branched cover. In this case, $\beta'$ may meet $\alpha'$ in two points $p'$ and $r'$, if $\delta_n'$ enters and exits in a single boundary component of $A'$. This situation is illustrated in Figure \ref{fig:qeven}. In this situation, we consider the product of $\beta = f(\beta')$ and $\alpha$ at $P = \{f(p'), f(r')\}$ and show that $D_{\alpha}^n(\beta) \in \mathcal S_q$ and $\ell_{\varphi}(D_{\alpha}^n(\beta)) < \ell_{\varphi}(D_{\alpha}^{n-1}(\beta)) + \ell_{\varphi}(\alpha) \cdot |P|$, as desired.\end{proof}

\begin{figure}[!htbp]
\centering
\def\svgwidth{2.5in}
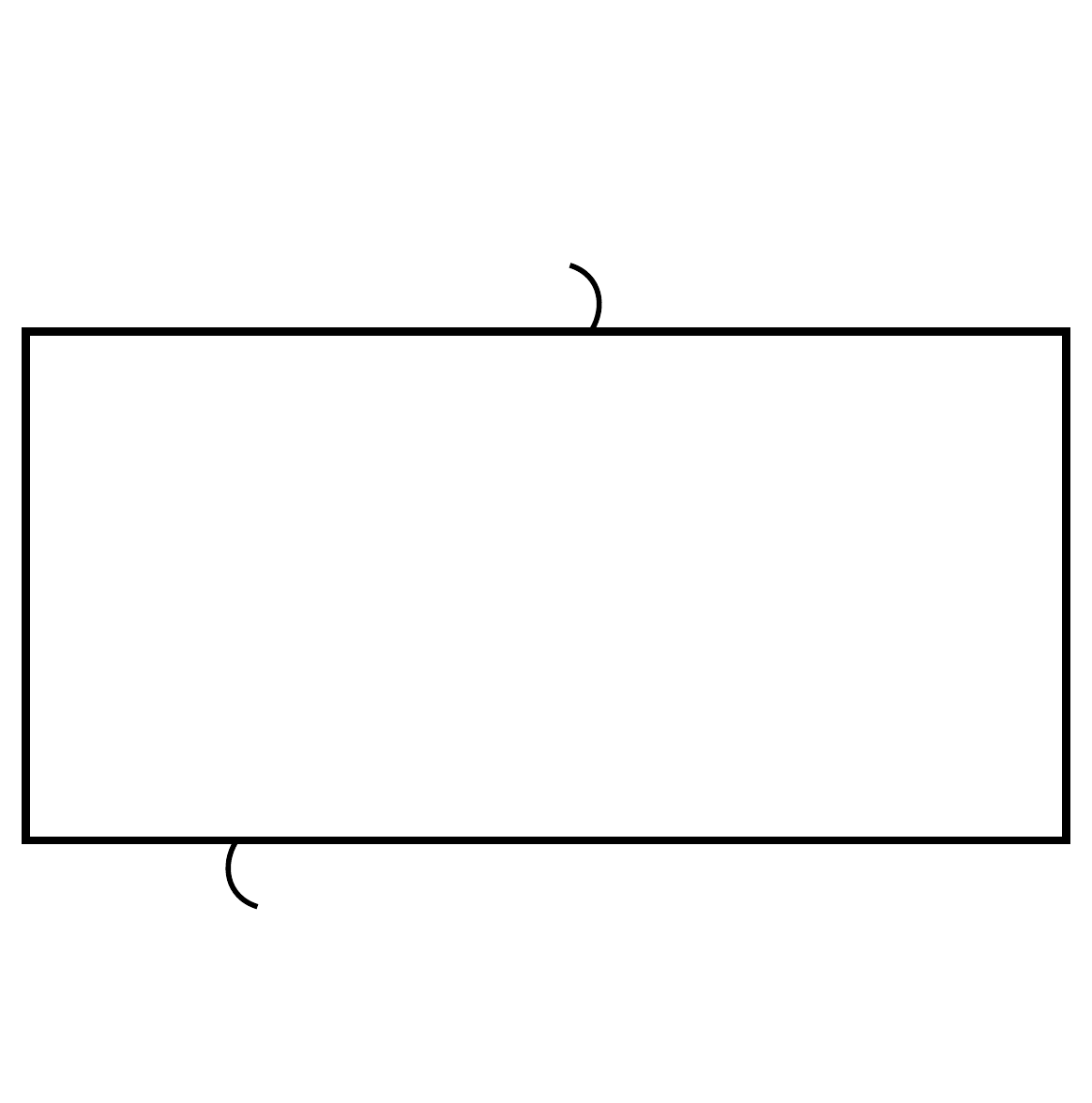
\caption{Constructing $\delta$ when $q$ is even}
\label{fig:qeven}
\end{figure}

Thus, we have proved the forward direction of Proposition \ref{prop:character} and it only remains to prove the reverse direction, which we will do next.

\begin{lemma} \label{lemma:notcylcurve} If $\alpha \not \in \cyl(\varphi)$, then for all closed curves $\beta$ with $i(\alpha, \beta) \neq 0$ there exists $N > 1$ such that for all $n \geq N$ and for all $P \in \mathcal P(\alpha, \beta)$, then \[\ell_{\varphi}(D_{\alpha}^n(\beta, P)) = \ell_{\varphi}(D_{\alpha}^{n-1}(\beta, P)) + \ell_{\varphi}(\alpha) \cdot |P|.\] \end{lemma}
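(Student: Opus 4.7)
The plan is to show that when $\alpha$ is not a cylinder curve, the $\varphi$-geodesic representative $g_n$ of $D_\alpha^n(\beta,P)$ stabilizes in its ``shape'' for large $n$: it consists of $|P|$ stretches that wind exactly $n$ times along the unique geodesic representative $a$ of $\alpha$, joined by transition arcs whose geometry depends only on $\beta$ and $P$, not on $n$. Subtracting lengths of $g_n$ and $g_{n-1}$ will then give exactly $|P|\cdot \ell_\varphi(\alpha)$.

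To set this up, I would first use $\alpha \notin \cyl(\varphi)$ to note that the $\varphi$-geodesic representative $a$ of $\alpha$ is unique, and by Proposition \ref{prop:bankovic} the geodesic representative of $\alpha^m$ is $a$ traversed $m$ times, again uniquely, for every $m\geq 1$. Next I would pass to the CAT(0) universal cover $(\widetilde S, \widetilde\varphi)$, fix a lift $\widetilde a$ stabilized by the deck transformation $A\in\pi_1(S)$ corresponding to $\alpha$, and fix lifts of the segments $\beta_0,\beta_1$ of $\beta$ assembled to match the word defining $D_\alpha^n(\beta,P)$. The curve $D_\alpha^n(\beta,P)$ then corresponds to a hyperbolic element $w_n\in\pi_1(S)$ that is conjugate to $A^n B$ when $|P|=1$ and to $A^n B_0 A^{\varepsilon n} B_1$ when $|P|=2$, where $B,B_0,B_1$ are the deck transformations determined by the chosen lifts of $\beta,\beta_0,\beta_1$. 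The axis $\widetilde g_n$ of $w_n$ in $\widetilde S$ is the unique lift of $g_n$.

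The central step, and the main obstacle, is to prove that for all $n\geq N$ the axis $\widetilde g_n$ coincides with a translate of $\widetilde a$ on a sub-arc covering $n$ full $\langle A\rangle$-fundamental domains at each of the $|P|$ wrapping positions, with the remaining ``transition'' portions lying in a fixed compact region (independent of $n$). The translation length of $w_n$ grows like $n\cdot |P|\cdot \ell_\varphi(\alpha)$, so $\widetilde g_n$ must travel arbitrarily far along something Hausdorff-close to $\widetilde a$; the job is to upgrade this closeness to actual coincidence. This is exactly where $\alpha\notin\cyl(\varphi)$ is essential: by Proposition \ref{prop:bankovic}, any $\varphi$-geodesic segment in $\widetilde S$ parallel to a long sub-arc of $\widetilde a$ but distinct from it would bound a flat Euclidean strip with $\widetilde a$, which would project in $S$ to a Euclidean cylinder homotopic to a power of $\alpha$, contradicting the hypothesis. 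Combined with the uniqueness of geodesics between distinct points in the CAT(0) cover and the rigidity of lifts of $\alpha^n$, this will force $\widetilde g_n$ to literally overlap $\widetilde a$ on these long central sub-arcs once $n$ is taken large enough that the ``flipping-in'' and ``flipping-out'' of $\widetilde g_n$ occurs outside a compact neighborhood of the lifts of $p$ and $r$.

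Finally, once the structural claim is in hand, I would conclude by writing $\ell_\varphi(g_n)=n\cdot|P|\cdot\ell_\varphi(\alpha)+L$ for $n\geq N$, where $L$ is the total length of the finitely many transition arcs, which depends on $\beta$ and $P$ but not on $n$. The same formula at $n-1$ yields $\ell_\varphi(g_n)-\ell_\varphi(g_{n-1})=|P|\cdot\ell_\varphi(\alpha)$, completing the proof. The $|P|=2$ case with $\varepsilon=-1$ requires only a cosmetic adjustment, since the two wrapping stretches then wind in opposite directions along $\widetilde a$ and its translates, but the same parallel-strip/cylinder argument rules out deviation and the total wrapping contribution is still $2n\cdot\ell_\varphi(\alpha)=|P|\cdot n\cdot\ell_\varphi(\alpha)$.
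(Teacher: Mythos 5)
Your overall architecture matches the paper's: both proofs work in the universal cover, identify $D_{\alpha}^n(\beta,P)$ with a word of the form $\alpha^{n}\beta\alpha^{n'}$ (or $\alpha^{n}\beta_0\alpha^{\varepsilon(n+n')}\beta_1\alpha^{n'}$ when $|P|=2$), force the axis of that word to overlap the axis $\widetilde{\alpha}$ along a sub-arc containing a full fundamental domain for $\langle\alpha\rangle$, and then read off the length identity by observing that passing from $n$ to $n+1$ amounts to surgering in one copy of the geodesic $a$ at each of the $|P|$ wrapping positions. You also correctly identify the crux: upgrading ``the axis is close to $\widetilde{a}$'' to ``the axis coincides with $\widetilde{a}$ on a long sub-arc.''

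The gap is in the mechanism you propose for that crux. The assertion that a $\varphi$-geodesic segment parallel to a long sub-arc of $\widetilde{a}$ but distinct from it must bound a flat Euclidean strip with $\widetilde{a}$ is not a consequence of Proposition \ref{prop:bankovic} and is false for segments: the Flat Strip Theorem applies to complete parallel geodesic lines, whereas a finite geodesic segment Hausdorff-close to $\widetilde{a}$ can approach and recede from it, crossing at shallow angles at singularities, without bounding any flat region; and even a genuine finite flat strip does not project to a cylinder in $S$ unless it is invariant under the deck transformation of $\alpha$. Patching this requires either a limiting argument (extract a complete parallel geodesic as $n\to\infty$, apply the Flat Strip Theorem to get a contradiction in the limit, and then still supply a separate argument converting convergence into coincidence for finite $n$), or the device the paper actually uses: since $\alpha\notin\cyl(\varphi)$, there exist singularities $x_i^{\pm}$ on $\widetilde{\alpha}$ at which the angle on the $H^{\pm}$ side is strictly greater than $\pi$; the rays leaving these points at angle exactly $\pi$ cut out a geodesically convex region whose ``neck'' is a sub-arc of $\widetilde{\alpha}$, and North--South dynamics places the endpoints of the axis of $\alpha^{n}\beta\alpha^{n'}$ into the arcs at infinity bounded by those rays, forcing the axis through the neck. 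Until you supply one of these arguments, the central structural claim --- and hence the length formula --- is not established.
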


The idea of the proof is to show that for $n$ sufficiently large, $D_{\alpha}^{n-1}(\beta)$ shares a full saddle connection with $\alpha$ so that we can obtain the geodesic representative of $D_{\alpha}^n(\beta)$ from $D_{\alpha}^{n-1}(\beta)$ by surgering in a copy of $\alpha$. 


\begin{proof} Suppose $\alpha \not \in \cyl(\varphi)$ and let $\beta$ be a closed curve on $S$ such that $i(\alpha, \beta) \neq 0$. Fix $ P \in \mathcal P(\alpha, \beta)$. There are two cases: either $|P| = 1$ or $|P| = 2$. We will begin by assuming that $|P| = 1$. 

Throughout the proof we will let $\alpha$ and $\beta$ be the geodesic representatives of their free homotopy classes. Since $|P| = 1$, then $P = \{ p \}$ for some $p \in \alpha \cap \beta$. View $\alpha$ and $\beta$ as loops based at $p$. Note that they are the geodesic representatives of their based homotopy classes.

Fix a lift $\widetilde{p}$ of $p$ to the universal cover. This defines an action of $\pi_1(S, p)$ on $\widetilde{S}$. The axis, $\widetilde{\alpha}$, of $\alpha$ passes through $\widetilde{p}$ and projects to the based loop $\alpha$. Denote the endpoints of $\widetilde{\alpha}$ by $a^{\pm}$. Let $H^{\pm} \subset \widetilde{S}$ and $S^{\pm} \subset S_{\infty}^1$ be the two halfspaces and two subarcs into which $\widetilde{\alpha}$ divides the universal cover and its boundary circle, as shown in Figure \ref{fig:axisOfAlpha}. 

\begin{figure}[!htbp]
\centering
\def\svgwidth{2.5in}
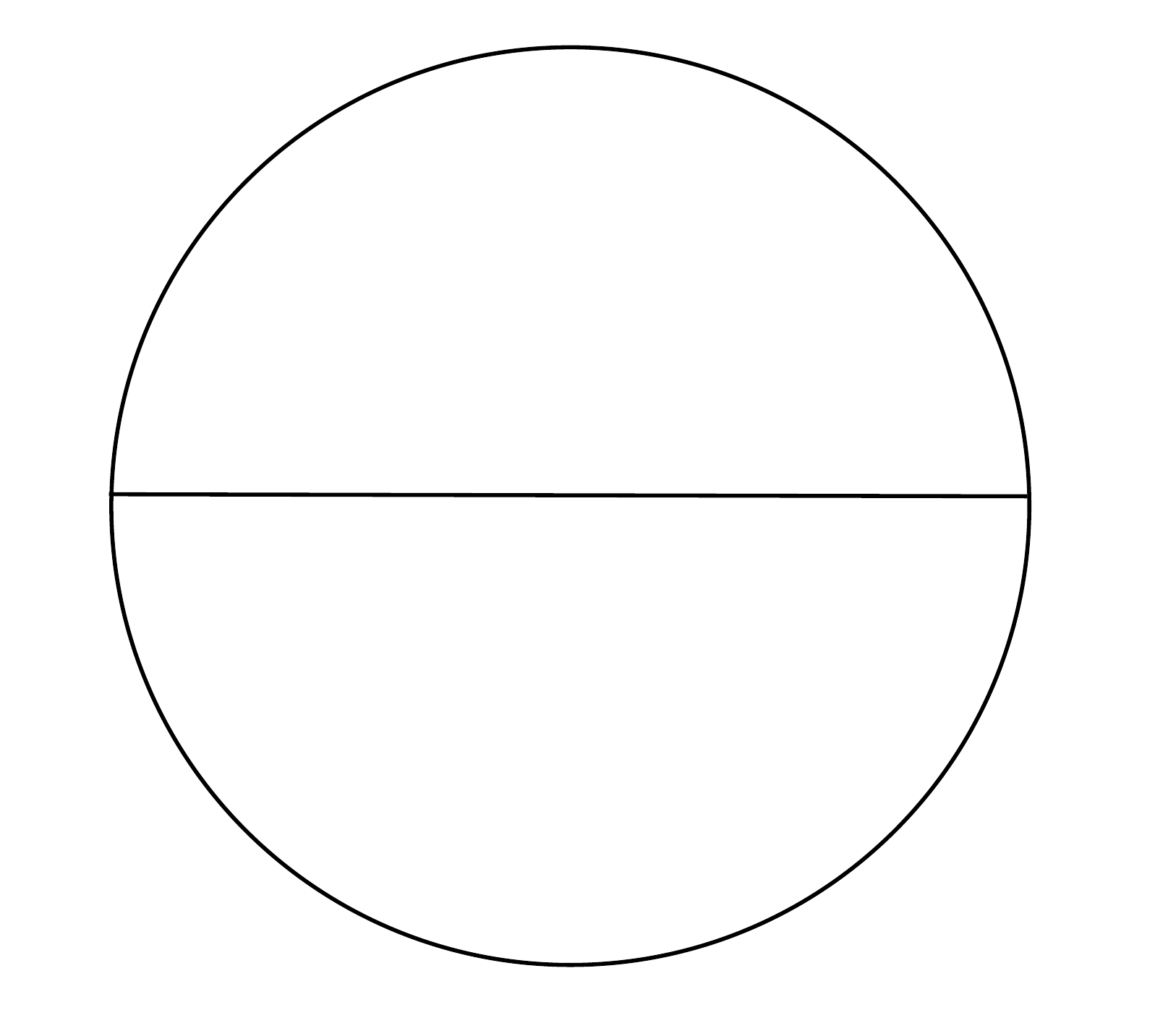
\caption{Schematic illustration of the axis of $\alpha$ in $\widetilde{S}$}
\label{fig:axisOfAlpha}
\end{figure}

Consider $\alpha^n \beta \alpha^{n'} \in \pi_1(S, p)$ for $n, n' > 0$. 

\bigskip

{\noindent \bf Claim:} For all disjoint neighborhoods $U^{\pm}$ of $a^{\pm}$, there exists $N, N' > 0$ such that for all $n \geq N$ and $n' \geq N'$ the axis, $\widetilde{\alpha^{n} \beta \alpha^{n'}}$, of $\alpha^{n} \beta \alpha^{n'}$ has end points in $U^{\pm}$. 

\begin{proof}[Proof of Claim] It is well known that hyperbolic isometries of $\widetilde{S} \cup S_{\infty}^1$ exhibit ``North-South dynamics". Namely, if $\psi$ is a hyperbolic isometry of $\widetilde{S} \cup S_{\infty}^1$ with attracting and repelling fixed points $x^+$ and $x^-$, respectively, then given disjoint neighborhoods $V^+, V^- \subset \widetilde{S} \cup S_{\infty}^1$ of $x^+, x^-$, respectively, there exists an $N \geq 1$ such that for all $n \geq N$: \[\psi^n((\widetilde{S} \cup S_{\infty}^1) \setminus V^-) \subset V^+ \text{ and } \psi^{-n}((\widetilde{S} \cup S_{\infty}^1) \setminus V^+) \subset V^-.\] This fact can be found in \cite{gromov87}. 

Recall that $\alpha$ acts as a hyperbolic isometry on $\widetilde{S}$ with attracting and repelling fixed points, $a^+$ and $a^-$, respectively. Let $U^{\pm} \subset S_{\infty}^1$ be disjoint neighborhoods of $a^{\pm}$. A sufficiently high power $n'$ of $\alpha$ will send $S_{\infty}^1 \setminus U^-$ into $U^+$. Note that, for $U^{\pm}$ sufficiently small, $\beta(\alpha^{n'}(U^+)) \subset S_{\infty}^1 \setminus U^-$, since the fixed points of $\beta$ are disjoint from those of $\beta$. Thus, a sufficiently high power $n$ of $\alpha$ will send $\beta \alpha^{n'}(U^+)$ into $U^+$. So we have that $\alpha^n \beta \alpha^{n'}(U^+) \subset U^+$. Hence, $\alpha^n \beta \alpha^{n'}$ has its attracting fixed point in $U^+$. A symmetric argument shows that $\alpha^n \beta \alpha^{n'}$ has its repelling fixed point in $U^-$ and we are done. \end{proof}

Since $\alpha$ is not a cylinder curve, then $\widetilde{\alpha}$ is a concatenation of saddle connections. If we consider the angles made on each side of the singularities, then we see that the angle cannot always be $\pi$ on either side of $\widetilde{\alpha}$, otherwise there is a nonsingular geodesic on $S$ parallel to $\alpha$ and $\alpha \in \cyl(\varphi)$, a contradiction. Thus, there exists a singularity $x_0^+$ on $\widetilde{\alpha}$ such that the angle at $x_0^+$ on the $H^+$ side made by the saddle connections meeting there is strictly greater than $\pi$, and likewise there is a singularity $x_0^-$  on $\widetilde{\alpha}$ chosen with respect to $H^-$. Since $\widetilde{\alpha}$ is periodic, then we can choose another pair of singularities $x_1^{\pm}$ with the same properties as $x_0^{\pm}$. In addition, we will choose $x_0^{\pm}$ and $x_1^{\pm}$ so that $x_0^-$ and $x_1^-$ are closer to $\widetilde{p}$ than $x_0^+$ and $x_1^+$. We will also choose $x_0^{\pm}$ and $x_1^{\pm}$ far enough apart so that $x_0^-$ together with $\widetilde{p}$ contains an entire fundamental domain for the action of $\alpha$, and likewise for $x_1^-$ together with $\widetilde{p}$. Now fix arbitrary geodesic rays $\gamma_{i}^{\pm}$ contained in $H^{\pm}$ starting at $x_i^{\pm}$ and making angle exactly $\pi$ with $\widetilde{\alpha}$. Let $A^{\pm}$ be the subarcs of the circle at infinity bounded by the endpoints of $\gamma_0^{\pm}$ and $\gamma_1^{\pm}$. This is illustrated in Figure \ref{fig:geodConvex}.

\begin{figure}[!htbp]
\centering
\def\svgwidth{2.5in}
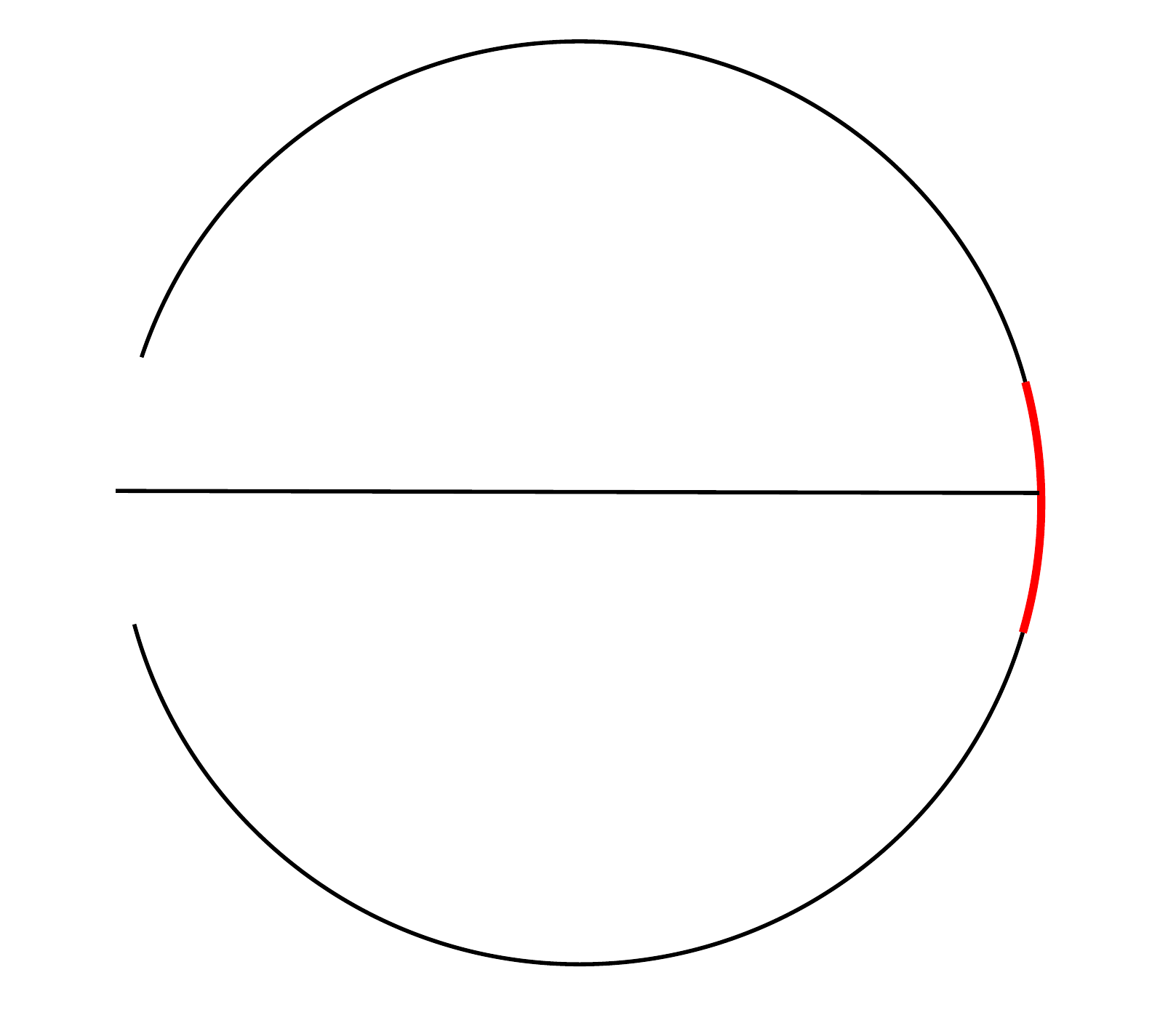
\caption{$\gamma_0^{\pm}$ and $\gamma_1{\pm}$ are shown by the dotted lines}
\label{fig:geodConvex}
\end{figure}

Choose $n$ and $n'$ sufficiently large so that one endpoint of the axis $\widetilde{\alpha^{n} \beta \alpha^{n'}}$ lies in $A^{+}$ and the other endpoint lies in $A^{-}$. Note that the axis $\widetilde{\alpha^{n} \beta \alpha^{n'}}$ must coincide with $\widetilde{\alpha}$ for at least the portion from $x_0^{-}$ to $x_1^{+}$ which includes $\widetilde{p}$. This is because the shaded region in Figure \ref{fig:shadedRegion} is geodesically convex, since it is the intersection of two half planes, and thus any geodesic with endpoints in $A^{\pm}$ must remain in this region. In particular, the geodesic representative of the based loop $\alpha^{n} \beta \alpha^{n'}$ starts and ends with a copy of the geodesic $\alpha$, and this is therefore also the geodesic representative of the free homotopy class. The geodesic representative of the based loop $\alpha^{n + 1} \beta \alpha^{n'}$ is obtained from that of $\alpha^{n} \beta \alpha^{n'}$ exactly by surgering in a copy of $\alpha$ and this is hence also the geodesic representative of its free homotopy class.

\begin{figure}[!htbp]
\centering
\def\svgwidth{2.5in}
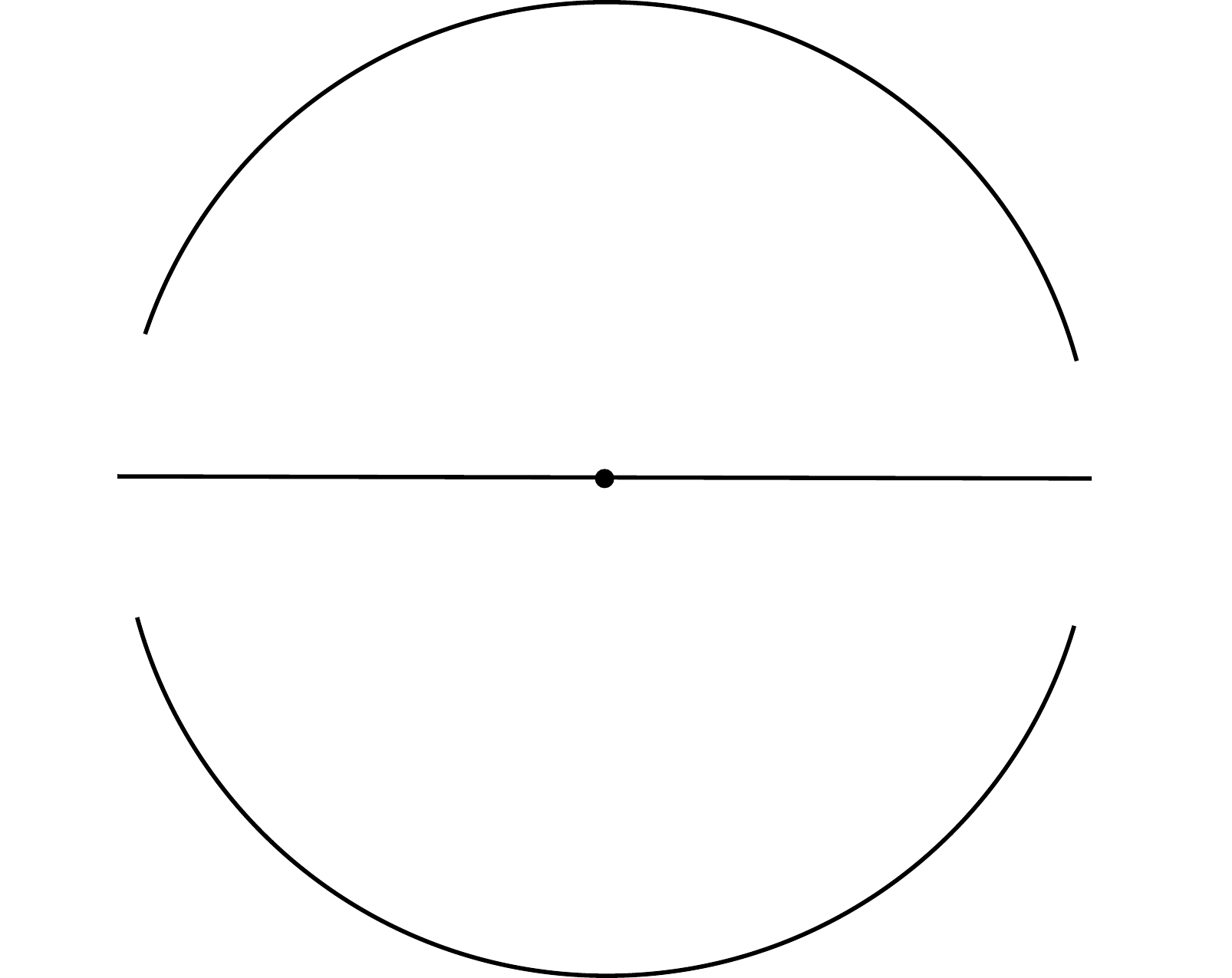
\caption{The shaded regions together with the connecting arc is geodesically convex}
\label{fig:shadedRegion}
\end{figure}

Note that $\beta \alpha^{n+n'}$ is conjugate to $\alpha^{n} \beta \alpha^{n'}$, and so the geodesic representatives of their free homotopy classes are the same. Thus, for all $n$ and $n'$ sufficiently large, we have that \begin{align*} \ell_{\varphi}(D_{\alpha}^{n+n'+1}(\beta)) = & \ell_{\varphi}(\beta \alpha^{n+n'+1})  \\ & = \ell_{\varphi}(\alpha^{n+1} \beta \alpha^{n'}) = \ell_{\varphi}(\alpha^{n} \beta \alpha^{n'})  + \ell_{\varphi}(\alpha) \\ & = \ell_{\varphi}(\beta \alpha^{n+n'}) + \ell(\alpha) = \ell_{\varphi}(D_{\alpha}^{n + n'}(\beta)) + \ell_{\varphi}(\alpha). \end{align*} 

Now suppose that $|P| = 2$. Thus, $P = \{p, r\}$ for some $p, r \in \alpha \cap \beta$. Recall that $D_{\alpha}^{n+n'}(\beta)$ is the concatenation $\alpha^{n+n'} \beta_0 \alpha^{\varepsilon \cdot (n + n')} \beta_1$ (which is a loop based at $p$) where $\beta_0$ and $\beta_1$ are the segments of $\beta$ defined in Definition \ref{def:product} and $\varepsilon$ is the sign of the intersection between $\alpha$ and $\beta$ at $r$.  Note that $\alpha^{n+n'} \beta_0 \alpha^{\varepsilon \cdot (n+n')} \beta_1$ is conjugate to $\alpha^{n} \beta_0 \alpha^{\varepsilon \cdot (n +n')} \beta_1 \alpha^{n'}$ and so the geodesic representatives of their free homotopy classes are the same. Furthermore, $\beta_0 \alpha^{\varepsilon \cdot (n + n')} \beta_1$ is also a loop based at $p$ that can be expressed as a product of loops $\beta_0' \alpha^{\varepsilon \cdot (n + n')} \beta_1'$ with $\beta_0', \beta_1' \in \pi_1(S, p)$. Since $\beta_0, \beta_1$ have fixed points disjoint from those of $\alpha$, then we can use essentially the same argument as in the Claim above, replacing $\beta$ with $\beta_0 \alpha^{\varepsilon \cdot (n + n')} \beta_1$, to show that for all disjoint neighborhoods $U^{\pm}$ of $a^{\pm}$, there exists $N, N' > 0$ such that for all $n \geq N$ and $n' \geq N'$ the axis of $\alpha^{n} \beta_0 \alpha^{\varepsilon \cdot (n+n')} \beta_1 \alpha^{n'}$ has endpoints in $U^{\pm}$. The rest of the proof then proceeds as above. \end{proof}

The completion of the proof of Lemma \ref{lemma:notcylcurve} completes the proof of Proposition \ref{prop:character} and hence, the proof of the Main Theorem.

\section{Open Questions}

Although we have already given a characterization of $\text{Flat}_q(S)$ metrics by a smaller subset of curves than was previously known, it would be interesting to know if $\mathcal{S}_q$ is essentially the smallest set of curves that will suffice.

For example, Duchin--Leininger--Rafi gave a complete characterization of which sets of simple closed curves on $S$ are spectrally rigid over $\text{Flat}(S)$ when $S$ has sufficient complexity. Let $\mathcal{PMF} = \mathcal{PMF}(S)$ denote Thurston's space of projective measured foliations on $S$.

\begin{thm} \label{thm:dlr2} If $3g - 3 + n \geq 2$, then $\Sigma \subset \mathcal{S} \subset \mathcal{PMF}$ is spectrally rigid over $\text{Flat}(S)$ if and only if $\Sigma$ is dense in $\mathcal{PMF}$. \end{thm}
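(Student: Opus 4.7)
My plan is to handle the two implications separately. The direction ``$\Sigma$ dense in $\mathcal{PMF}$ implies $\Sigma$ spectrally rigid'' is a reduction to the known spectral rigidity of the full set $\mathcal{S}$, carried out by a Bonahon continuity argument. The converse direction is a construction problem: given $\Sigma$ missing an open set of $\mathcal{PMF}$, build an isospectral but non-isometric pair of flat metrics. The latter is the main obstacle.

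For $(\Leftarrow)$, assume $\Sigma$ is dense in $\mathcal{PMF}$ and $\lambda_\Sigma(\varphi_1) = \lambda_\Sigma(\varphi_2)$. By Theorem~\ref{thm:bonahon} the intersection pairings $i(L_{\varphi_1}, \cdot)$ and $i(L_{\varphi_2}, \cdot)$ are continuous on $\text{Curr}(S) \supset \mathcal{MF}(S)$. For any simple closed curve $\beta$, choose $\alpha_n \in \Sigma$ with $[\alpha_n] \to [\beta]$ in $\mathcal{PMF}$, and positive scalars $c_n > 0$ so that $c_n \alpha_n \to \beta$ in $\mathcal{MF}(S)$. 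Continuity of the intersection form together with the hypothesized equality on $\Sigma$ gives $\ell_{\varphi_1}(\beta) = \ell_{\varphi_2}(\beta)$, so the two length spectra agree on all of $\mathcal{S}$. The spectral rigidity of $\mathcal{S}$ over $\text{Flat}(S)$---the Duchin--Leininger--Rafi theorem in the quadratic case, promoted to general flat metrics via the Liouville-current embedding combined with Theorem~\ref{thm:support-rigidity}---then yields $\varphi_1 = \varphi_2$.

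For $(\Rightarrow)$, I prove the contrapositive. Suppose $\Sigma$ is not dense in $\mathcal{PMF}$, and fix a non-empty open $U \subset \mathcal{PMF}$ disjoint from $\Sigma$. Pick a base metric $\varphi_0 \in \text{Flat}(S)$ and work in period coordinates on its stratum; the hypothesis $3g - 3 + n \geq 2$ guarantees this stratum has real dimension at least $2$, giving room for a nontrivial deformation. Each length function $\ell_\varphi(\alpha)$ is piecewise real-analytic in period coordinates, with walls corresponding to changes in the combinatorial type of the $\varphi$-geodesic representative of $\alpha$. Away from these walls, the differential $d\ell_\alpha$ at $\varphi_0$ is a linear functional on the tangent space depending only on the projective class of $\alpha$ in $\mathcal{PMF}$. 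Since $\Sigma \cap U = \emptyset$, the family $\{d\ell_\alpha\}_{\alpha \in \Sigma}$ lies in a proper closed subcone of the cotangent space (dual to $\mathcal{PMF} \setminus U$), so it admits a common nontrivial annihilator $v$ pointing toward $U$.

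The main obstacle is promoting this infinitesimal isospectral deformation $v$ into an actual path $\varphi_t$ of flat metrics along which $\ell_{\varphi_t}(\alpha)$ is exactly (not just to first order) constant for every $\alpha \in \Sigma$. I would use an implicit-function argument in period coordinates: the family $\{\ell_\alpha\}_{\alpha \in \Sigma}$ defines a map from a neighborhood of $\varphi_0$ in the stratum into $\mathbb{R}^\Sigma$ whose derivative has infinite-dimensional kernel in the $U$-directions, and a standard transversality argument within a single cell of the wall decomposition produces a real-analytic isospectral path. Wall-crossings (where the geodesic combinatorics jump) are handled by the piecewise real-analyticity of length functions together with choosing $v$ generically so as to avoid wall tangencies at $\varphi_0$. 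The resulting pair $\varphi_0$ and $\varphi_t$ for small $t > 0$ furnishes the required non-spectrally-rigid example, completing the proof.
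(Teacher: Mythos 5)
First, a point of context: the paper does not prove Theorem~\ref{thm:dlr2} at all --- it is quoted in the ``Open Questions'' section as a result of Duchin--Leininger--Rafi \cite{duchleinrafi10}, so there is no in-paper proof to compare against. Your $(\Leftarrow)$ direction is essentially the standard (and correct) argument: pick $c_n>0$ with $c_n\alpha_n\to\beta$ in $\mathcal{MF}(S)\subset\text{Curr}(S)$, use continuity and bilinearity of Bonahon's pairing with the Liouville current to transfer equality of lengths from $\Sigma$ to all of $\mathcal{S}$, and then invoke spectral rigidity of the full set of simple closed curves. That part is fine.

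The $(\Rightarrow)$ direction, however, has a genuine gap at its central step. You assert that because $\Sigma$ avoids an open set $U\subset\mathcal{PMF}$, the differentials $\{d\ell_\alpha\}_{\alpha\in\Sigma}$ at an \emph{arbitrary} base metric $\varphi_0$ ``lie in a proper closed subcone of the cotangent space'' and hence admit a common nontrivial annihilator. Neither half of this survives scrutiny. A proper closed subcone of a finite-dimensional vector space can perfectly well span the whole space (the positive orthant in $\mathbb{R}^n$ is a proper closed cone), so ``proper subcone'' does not yield a common annihilator. More fundamentally, there is no reason the covectors $d\ell_\alpha$ for $[\alpha]\in\mathcal{PMF}\setminus U$ should fail to span the (finite-dimensional) cotangent space at a generic $\varphi_0$: the complement of $U$ still contains, say, the curves of several pants decompositions, and in general these already determine the metric locally. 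The whole difficulty of this implication is that one must \emph{construct} a special flat metric, adapted to $U$, together with an explicit deformation that changes the lengths only of curves whose projective classes lie in $U$; this is what Duchin--Leininger--Rafi do, via concrete geometric surgeries on flat structures built from cylinders in a direction associated to a foliation in $U$. Your argument supplies no such construction, and the claim that it works at an arbitrary $\varphi_0$ is false.

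A secondary gap: even granting a common annihilator $v$ of the first derivatives at $\varphi_0$, you must produce a path along which infinitely many length functions are \emph{exactly} constant. A finite-dimensional implicit function or transversality argument handles finitely many constraints (or a family whose differentials span a fixed finite-dimensional space along the whole path), but you have only an infinitesimal condition at the single point $\varphi_0$; nothing forces the annihilation to persist along the integral curve, and the piecewise-analyticity of the length functions does not by itself bridge this. You correctly flag this as ``the main obstacle,'' but the proposed resolution does not actually overcome it.
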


In light of Theorem \ref{thm:dlr2}, we could ask whether there is a significantly smaller subset of $\mathcal{S}_q$ which is rigid over $\text{Flat}_q(S)$. Here we consider the closure, $\overline{\mathcal{S}_q}$, of $\mathcal{S}_q$ inside the set of projective currents on $S$, denoted $\mathcal{P}\text{Curr}(S)$.

\begin{conj} If $\Sigma \subset S_q$ is spectrally rigid over $\text{Flat}_q(S)$, then is $\overline{\Sigma} = \overline{\mathcal{S}_q}$? \end{conj}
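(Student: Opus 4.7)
The question asks, for $\Sigma \subset \mathcal{S}_q$ spectrally rigid over $\text{Flat}_q(S)$, whether the closure $\overline{\Sigma}$ in $\mathcal{P}\text{Curr}(S)$ must equal $\overline{\mathcal{S}_q}$. My plan is to prove the contrapositive: assuming $\overline{\Sigma} \subsetneq \overline{\mathcal{S}_q}$, pick $\mu \in \overline{\mathcal{S}_q} \setminus \overline{\Sigma}$ and an open neighborhood $U \subset \mathcal{P}\text{Curr}(S)$ of $\mu$ disjoint from $\overline{\Sigma}$, then produce two distinct metrics $\varphi_1, \varphi_2 \in \text{Flat}_q(S)$ with $\lambda_{\Sigma}(\varphi_1) = \lambda_{\Sigma}(\varphi_2)$. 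The reverse implication (not asked, but a useful warm-up) is comparatively easy: by continuity and bilinearity of Bonahon's intersection form (Theorem \ref{thm:bonahon}) together with $\ell_{\varphi}(\alpha) = i(\alpha, L_{\varphi})$, equality on $\Sigma$ extends to equality (up to projective normalization) on $\overline{\Sigma}$, after which the Main Theorem forces $\varphi_1 = \varphi_2$ whenever $\overline{\Sigma} = \overline{\mathcal{S}_q}$.

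To build a non-rigid family I would seek a smooth arc $\{\varphi_t\}_{t \in (-\epsilon, \epsilon)}$ in $\text{Flat}_q(S)$ whose infinitesimal change in Liouville current, $\dot L_{\varphi_t}\bigl|_{t=0}$, pairs trivially with every current in $\overline{\Sigma}$ but nontrivially with some current in $U$. In period coordinates on the stratum of $q$-differentials, the tangent space to $\text{Flat}_q(S)$ is locally a relative cohomology group dual to classes of saddle connections, and one expects the derivative of length functions to be computable as a sum over saddle connections traversed by the geodesic representative. Corollary \ref{corollary:vorobets}, together with the density of periodic directions from Theorem \ref{thm:vorobets}, should allow me to select a cylinder direction realized by geodesics with endpoints in $U$ but not by any limit of $\Sigma$-geodesics; the cotangent vector dual to a saddle connection in this direction becomes the candidate infinitesimal deformation. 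Integrating, while constraining to preserve unit area (and, in the $q \leq 2$ case, excluding the affine deformation ambiguity of Theorem \ref{thm:support-rigidity} by holonomy comparison as in the Main Theorem's proof), should yield a genuine family $\{\varphi_t\}$, and the Main Theorem applied in reverse would certify that different parameter values give distinct metrics whose $\Sigma$-length spectra coincide.

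The main obstacle is the mismatch between directional data on the finite-dimensional $\text{Flat}_q(S)$ and closure data in the infinite-dimensional $\mathcal{P}\text{Curr}(S)$: the length functions $\ell_{\varphi}(\alpha)$ are generally only piecewise smooth because closed geodesics can change combinatorial type along a path of metrics, so a first-order computation does not by itself guarantee that $\Sigma$-lengths remain constant along the entire deformation. Conceptually, the statement is the stratum analogue of Duchin--Leininger--Rafi's Theorem \ref{thm:dlr2}, and the hardest step I anticipate is that $\overline{\mathcal{S}_q}$ inside $\mathcal{P}\text{Curr}(S)$ is considerably less well understood than $\mathcal{PMF}$. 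A successful argument will likely require a structural description of this closure, perhaps as the image in $\mathcal{P}\text{Curr}(S)$ of measured laminations pulled up to the various holonomy almost trivializing covers, so that density of $\Sigma$ there can be translated into density of the associated tangent functionals on $\text{Flat}_q(S)$.
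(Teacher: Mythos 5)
This statement is posed in the paper as an open \emph{question} (the \texttt{conj} environment is declared as ``Question''), and the paper offers no proof of it; so there is no ``paper's approach'' to compare against. What you have written is a research strategy modeled on the Duchin--Leininger--Rafi argument for Theorem \ref{thm:dlr2}, not a proof, and the obstacles you yourself flag are genuine and unresolved. The most concrete gap is the very first step of your contrapositive: from $\overline{\Sigma} \subsetneq \overline{\mathcal{S}_q}$ you want to extract an infinitesimal deformation of some $\varphi \in \text{Flat}_q(S)$ annihilating $d\ell_{\cdot}(\alpha)$ for every $\alpha \in \Sigma$. But $\text{Flat}_q(S)$ is finite-dimensional, so the covectors $\{d\ell_{\varphi}(\alpha)\}_{\alpha \in \Sigma}$ can span the entire cotangent space at every $\varphi$ even when $\overline{\Sigma}$ is a proper (indeed quite small) subset of $\overline{\mathcal{S}_q}$ in the infinite-dimensional space $\mathcal{P}\text{Curr}(S)$. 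Ruling this out requires showing that the covectors attached to curves limiting into your open set $U$ are \emph{not} in the span of those attached to $\overline{\Sigma}$, and that is precisely the ``mismatch between directional data on the finite-dimensional stratum and closure data in $\mathcal{P}\text{Curr}(S)$'' you name; naming it does not close it. It presupposes a structural description of $\overline{\mathcal{S}_q}$ and of the map from projective currents to length covectors, which is the content of the paper's second open question.

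Two further points. First, even granting a kernel direction, your own caveat about piecewise smoothness is fatal as written: a first-order computation at one metric does not produce a path along which $\Sigma$-lengths are \emph{constant}, and the combinatorial type of a geodesic (its decomposition into saddle connections) changes along any nontrivial path in the stratum, so the covector you killed at $t=0$ is not the relevant one for $t \neq 0$. In the quadratic-differential case this is handled by a global, not infinitesimal, construction, and no analogue is supplied here. Second, your ``warm-up'' converse (density of $\Sigma$ in $\overline{\mathcal{S}_q}$ implies rigidity) is essentially correct --- continuity and bilinearity of the form in Theorem \ref{thm:bonahon} let you pass from $\Sigma$ to $\mathcal{S}_q$ after normalizing the projective classes, and then the Main Theorem applies --- but it is not the direction being asked, so it cannot substitute for the missing argument. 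As it stands, the proposal is a reasonable program statement for attacking the question, with the hard steps still open.
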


In the same vein, we can ask what $\mathcal{S}_q$ looks like. 

\begin{conj} What is $\overline{\mathcal{S}_q} \subset \mathcal{P}\text{Curr}(S)$? Is there a ``nice" description of it? \end{conj}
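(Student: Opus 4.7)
My proposal is to conjecture that $\overline{\mathcal{S}_q}$ admits the following description. Let $\mathcal{B}_q$ denote the collection of (isotopy classes of) cyclic branched covers $p \colon \widetilde{S} \to S$ whose deck transformation group has order $k$ with $k \mid q$ when $q$ is odd and $k \mid 2q$ when $q$ is even, branched over some finite subset of $S$. For each such $p \in \mathcal{B}_q$, the pushforward of measures gives a continuous map $p_* \colon \mathcal{P}\text{Curr}(\widetilde{S}) \to \mathcal{P}\text{Curr}(S)$ (well-defined on the subset where $p_*$ is nontrivial). I conjecture that
\[
\overline{\mathcal{S}_q} \;=\; \overline{\bigcup_{p \in \mathcal{B}_q} p_*(\mathcal{PMF}(\widetilde{S}))}.
\]
This specializes correctly at $q=1,2$ to the classical fact $\overline{\mathcal{S}} = \mathcal{PMF}(S)$, since the almost trivializing cover is $S$ itself.

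First I would prove the inclusion $(\supset)$. By definition every $\alpha \in \mathcal{S}_q$ is the image of a simple closed curve $\widetilde{\alpha}$ in the holonomy almost trivializing cover of some $\varphi \in \text{Flat}_q(S)$, and this cover is cyclic with deck group of order dividing $q$ or $2q$, hence lies in $\mathcal{B}_q$. Thus $\mathcal{S}_q \subset \bigcup_{p \in \mathcal{B}_q} p_*(\mathcal{S}(\widetilde{S}))$. For the closure, given any $p \in \mathcal{B}_q$ and any $\widetilde{\mu} \in \mathcal{PMF}(\widetilde{S})$, Thurston density provides simple closed curves $\widetilde{\alpha}_n \to \widetilde{\mu}$ in $\mathcal{P}\text{Curr}(\widetilde{S})$. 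It remains to produce, for each $n$, a metric $\varphi_n \in \text{Flat}_q(S)$ whose holonomy almost trivializing cover factors through $p$ and carries $\widetilde{\alpha}_n$ as the simple geodesic representative of a curve that projects to the $\varphi_n$-geodesic of $p(\widetilde{\alpha}_n)$. Such metrics can be constructed by pulling back a $q$-differential on $S$ with prescribed cone structure, and continuity of $p_*$ then gives $p(\widetilde{\alpha}_n) \to p_*(\widetilde{\mu})$ in $\mathcal{P}\text{Curr}(S)$.

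The reverse inclusion $(\subset)$ is where the substantive content lies. Given a sequence $\alpha_n \in \mathcal{S}_q$ converging projectively to some current $\mu$, each $\alpha_n$ is witnessed by a metric $\varphi_n$ and cover $p_n \in \mathcal{B}_q$. Because the deck group orders are bounded and the topological type of $\widetilde{S}$ is determined by the degree and number of branch points, one can pass to a subsequence where the covers $p_n$ stabilize topologically to some fixed $p \in \mathcal{B}_q$ (at least after a mapping class group adjustment; here one must absorb the varying locations of branch points). Then each $\alpha_n$ lifts to a simple closed curve $\widetilde{\alpha}_n$ on the corresponding $\widetilde{S}$, and one would extract a projective current limit $\widetilde{\mu} \in \overline{\mathcal{S}(\widetilde{S})} = \mathcal{PMF}(\widetilde{S})$ satisfying $p_*(\widetilde{\mu}) = \mu$.

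The main obstacle is precisely the compactness argument in the reverse inclusion: the branch loci of the covers $p_n$ may escape to infinity in the configuration space of points of $S$, and the lifts $\widetilde{\alpha}_n$ may develop trivial intersection with the branch divisor in the limit. One must rule out pathological degenerations, likely by using Proposition~\ref{prop:character} (which ties $q$-simplicity to metric geometry via cylinder curves) to constrain how the almost trivializing covers can vary. A secondary difficulty is ensuring that $p_*$ on projective currents is well-behaved on the relevant locus; one may need to work with unprojectivized currents and carefully renormalize, using that $p_*$ scales intersection by the degree. If these obstacles can be overcome, the resulting description would parallel Theorem~\ref{thm:dlr2} by identifying $\overline{\mathcal{S}_q}$ as a finite (or countable) union of images of Thurston boundaries of cyclic branched covers.
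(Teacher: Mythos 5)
The statement you were asked to prove is not a theorem of this paper: the \texttt{conj} environment here is a ``Question,'' and this one is posed in the closing section as an open problem with no proof or answer offered. So there is no argument of the paper to compare yours against; what you have written is a conjectural answer plus a research plan, and it must be judged as such. As a plan it is reasonable in spirit (it deliberately parallels Theorem \ref{thm:dlr2}), but it is not a proof, and it has gaps beyond the two you flag yourself.

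The most serious gap is in your inclusion $(\supset)$. Definition \ref{def:q-simple} does not only ask for a simple closed curve in the holonomy almost trivializing cover; it asks that its \emph{geodesic representative project to the geodesic representative} of the image curve. This is a genuine restriction: a concatenation of saddle connections can make angle at least $\pi$ on both sides at every cone point upstairs and yet fail to do so downstairs, because cone angles decrease under the branched projection. This is precisely why Lemma \ref{lemma:cylcurve} must engineer $\beta'$ to make angle between $\pi$ and $\pi + \frac{\pi}{q}$ at each cone point it meets. Consequently your claim that every $\widetilde{\mu} \in \mathcal{PMF}(\widetilde{S})$ is a limit of curves witnessing $q$-simplicity requires showing that such ``projectable'' simple closed curves (for suitably varying $\varphi_n$) are dense in $\mathcal{PMF}(\widetilde{S})$; ``pulling back a $q$-differential with prescribed cone structure'' does not address this point, and without it your candidate set could strictly contain $\overline{\mathcal{S}_q}$. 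A second issue is that your indexing set $\mathcal{B}_q$ is too large: the covers that actually occur are branched over cone loci of $\text{Flat}_q(S)$ metrics with monodromy induced by $P_{\varphi}$ modulo $\left< r_{\pi} \right>$, and their deck groups have order dividing $q$ in all cases (for even $q$ the element $r_{\pi}$ already lies in $\left< r_{2\pi/q} \right>$, so the degree is if anything smaller, not $2q$). Finally, the compactness difficulty you identify in $(\subset)$ is real, though Gauss--Bonnet bounds the number of cone points for fixed $q$ and genus, which should help control the branch loci. None of this shows your proposed description is false, but as written it is a programme with substantive unproved steps, not an answer to the question.
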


%

\bibliographystyle{plain}

\bibliography{qdifferentials}

\end{document}